\documentclass[11pt]{article}
\usepackage[english]{babel}
\usepackage{amsmath,amsthm,amssymb}
\usepackage[all]{xy}
\usepackage{setspace}
\usepackage[top=1.2in, bottom=1.4in, left=1.3in, right=1.3in]{geometry}
\usepackage{color}
\definecolor{grau}{rgb}{0.5,0.5,0.5}
\usepackage[colorlinks, linkcolor=grau, citecolor=grau, urlcolor=grau]{hyperref}
\usepackage{breakurl}
\usepackage{bibspacing}
\usepackage{leftidx}
\frenchspacing
\sloppy

\newtheorem{theorem}{Theorem}[section]
\newtheorem{lemma}[theorem]{Lemma}
\newtheorem{proposition}[theorem]{Proposition}

\numberwithin{equation}{section}

\def\sp{\textnormal{Spec}}
\def\P1k{\mathbb P_{k}^{1}}
\def\deg{\textnormal{deg} \ }
\def\slos{\textnormal{SL}_2(\mathcal O_{\Sb})}
\def\glos{\textnormal{GL}_2(\mathcal O_{\Sb})}

\newcommand {\OK}  {{\mathcal O_{K}}}
\newcommand {\OS}  {{\mathcal O_{S}}}
\newcommand {\OSt}  {{\mathcal O^{\times}_{S}}}
\newcommand {\Op}  {{\mathcal O_{\mathfrak p}}}

\newcommand {\OSS}  {{\mathcal O_{T}}}
\newcommand {\OSSt}  {{\mathcal O^{\times}_{T}}}

\newcommand {\QQ}  {{\mathbb Q}}
\newcommand {\ZZ}  {{\mathbb Z}}
\newcommand {\ord} {{\textnormal{ord}}}

\newcommand {\PP}  {{\mathbb P}}
\newcommand {\NN}  {{\mathbb N}}

\newcommand {\ps}  {q}
\newcommand {\ts}  {t}

\newcommand {\ta}  {s}
\newcommand {\ls}  {U}

\newcommand {\cc} {\Omega}

\newcommand {\nn} {\nu}
\newcommand {\mm} {\mu}

\newcommand {\kkk} {c}
\newcommand {\Sb} {T}

\onehalfspacing

\date{}

\begin{document}
\author{Rafael von K\"anel}

\title{An effective proof of the hyperelliptic Shafarevich conjecture}


\maketitle

\begin{abstract}
Let $C$ be a hyperelliptic curve of genus $g\geq 1$ over a number field $K$ with good reduction outside a finite set of places $S$ of $K$. We prove that $C$ has a Weierstrass model over the ring of integers of $K$ with height effectively bounded only in terms of $g$, $S$ and $K$. In particular, we obtain that for any given number field $K$, finite set of places $S$ of $K$ and integer $g\geq 1$ one can in principle determine
the set of $K$-isomorphism classes of hyperelliptic curves over $K$ of genus $g$ with good reduction outside $S$.
\end{abstract}

\bigskip
\section{Introduction}\label{secintro}
Let $K$ be a number field, let $S$ be a finite set of places of $K$ and let $g\geq 1$ be an integer.  The main goal of this article is to show that for given $K$, $S$ and $g$ there is an effective constant $\cc(K,S,g)$ with the following property. If $C$ is a hyperelliptic curve over $K$ of genus $g$ with good reduction outside $S$, then there is a Weierstrass model of $C$ over $\OK$,
\begin{equation*}
Y^2=f(X),
\end{equation*}
such that the absolute logarithmic Weil height of the polynomial $f\in\OK[X]$ is at most $\cc(K,S,g)$, where $\OK$ is the ring of integers in $K$. Our results  hold in particular  for all elliptic curves over $K$ and for all smooth, projective and geometrically connected genus $2$ curves over $K,$ since they are hyperelliptic (see section \ref{secprelim}).  

We deduce a completely effective Shafarevich conjecture \cite{shafarevich:conjecture} for hyperelliptic curves over $K$ which generalizes and improves the results for elliptic curves over $K$ of Coates \cite{coates:shafarevich} and of Fuchs, W\"ustholz and the author \cite{fuvkwu:elliptic}. In addition, our effective result allows in principle to list all $K$-isomorphism classes of hyperelliptic curves over $K$ of genus $g$ with good reduction outside $S$. Such lists already exist for special $K$, $S$ and $g$. For example, the case $K=\QQ$, $S=\{2\}$ and $g=2$ was established by Merriman and Smart in \cite{mesm:genus2,smart:shafarevich}. 
We also mention that the theorem of this paper is used in the proof of \cite[Theorem 1]{rvk:szpiro} which gives an effective exponential version of Szpiro's discriminant conjecture for hyperelliptic curves over $K$.

In our proofs we combine the effective reduction theory of Evertse and Gy\H{o}ry \cite{evgy:binaryforms}, which is based on the theory of logarithmic forms, with results for Weierstrass models of hyperelliptic curves obtained by Lockhart \cite{lockhart:discriminant} and by Liu \cite{liu:models}. This method can be exhausted to deal with the corresponding problem for slightly more general curves. Generalizations are motivated by the ``effective Mordell Conjecture'', which would follow from a version of our main result for arbitrary curves over $K$ of genus at least two. However, we point out that our method certainly does not work in this  generality. To conclude the discussion of potential Diophantine applications of our results we mention  Levin's  paper \cite{levin:siegelshaf}. In this paper, Levin establishes a link ``effective Shafarevich conjecture for hyperelliptic Jacobians $\Rightarrow$ effective Siegel theorem for hyperelliptic curves''.

Par{\v{s}}in \cite{parshin:shafarevich} and Oort \cite{oort:shafarevich} proved the qualitative Shafarevich conjecture for hyperelliptic curves over $K$, and Faltings'  finiteness theorems in \cite{faltings:finiteness} cover in particular Shafarevich's conjecture for all curves over $K$ of genus at least two. However, it is not known if the proofs of Par{\v{s}}in, Oort and Faltings are effective (i.e. allow in principle to list all $K$-isomorphism classes of hyperelliptic curves over $K$ of genus $g$ with good reduction outside $S$). See for example Levin's discussion in \cite{levin:siegelshaf} or section \ref{secstat} for certain crucial aspects of the method of Par{\v{s}}in and Oort.

The plan of the paper is as follows: In section \ref{secprelim} we define the geometric objects, as for example Weierstrass models, discriminants and Weierstrass points, to state and discuss the results in section \ref{secstat}. In section \ref{secgeom} we begin with Lemma \ref{lemsb}. It allows to construct effectively principal ideal domains in $K$. Then we collect some  results for Weierstrass models to prove Proposition \ref{propgeom} which gives a Weierstrass model of $C$ with certain minimality properties. In section \ref{seceff} we go into number theory. We first give some elementary results for binary forms. Then in  Proposition \ref{propeff} we apply the theory of logarithmic forms to establish effective estimates for the height of some monic polynomials and binary forms with given discriminant. In section \ref{secproof} we prove the theorem by the following strategy. We first assume that $C$ has a $K$-rational Weierstrass point. On using results of Lockhart \cite{lockhart:discriminant} we obtain a Weierstrass model of $C$ over $\sp(\OSS)$
$$Y^2=f(X),  \ \ \ \Delta(f)\in \OSSt$$
such that the discriminant $\Delta(f)$ of $f\in \OSS[X]$ has minimality properties, for $\Sb\supseteq S$  a controlled finite set of places of $K$ and $\OSS$ the ring of $\Sb$-integers with units $\OSSt$. Then we deduce some $\Sb$-unit equations
$$x+y=1,\ \ x,y\in \OSSt.$$
Gy\H{o}ry and Yu \cite{gyyu:sunits} applied the theory of logarithmic forms to bound the height of $x,y$. On using their result we get $\tau\in \OSS$ such that $f(X+\tau)$ has height controlled by $K,S,g,\Delta(f)$ and then by $K,S,g$, since $\Delta(f)$ has minimality properties. A suitable transformation of $Y^2=f(X+\tau)$ provides the desired Weierstrass model of $C$ over $\sp(\OK)$ in this case. In the remaining case where $C$ has no $K$-rational Weierstrass point we use that hyperelliptic curves and binary forms are closely related. We slightly extend an effective theorem for binary forms of Evertse and Gy\H{o}ry \cite{evgy:binaryforms}, which is based on the theory of logarithmic forms, to combine it with a global result of the theory of Weierstrass models of Liu \cite{liu:models}. This gives a Weierstrass model of $C$ over $\sp(\OSS)$ with height bounded and then as in the first case we get the desired model over $\sp(\OK)$. 

Throughout this paper we shall use the following conventions. The number of curves over $K$ always refers to the number of $K$-isomorphism classes of these curves. We identify a closed point $\mathfrak p\in\sp(\OK)$ with the corresponding finite place $v$ of $K$ and vice versa, we denote by $\log$ the principal value of the natural logarithm and we define the maximum of the empty set and the product taken over the empty set as $1$.

\section{Geometric preliminaries}\label{secprelim}

In this section we give some definitions to state the results in the next section. Let $K$ be a number field and let $g\geq 1$ be an integer.

A hyperelliptic curve $C$ over $K$ of genus $g$ is a smooth projective and geometrically connected curve $C\rightarrow \sp (K)$ of  genus $g$
such that there is a finite morphism $\varphi:C\to \PP_K^1$  of degree 2, for $\PP_K^1$ the projective line over $K$.
For example, \cite[Proposition 7.4.9]{liu:ag} gives that all elliptic  and all smooth projective and geometrically connected genus two curves over $K$ are hyperelliptic curves over $K$. 

Let $-\textnormal{id}$ be the automorphism of a hyperelliptic curve $C$ of order two induced by a generator of the Galois group corresponding to $\varphi:C\to \PP_K^1$.
If $g\geq 2$, then a $K$-rational Weierstrass point of $C$ is a section of $C\to\sp(K)$ which is fixed by $-\textnormal{id}$ and if $g=1$, then any section of $C\rightarrow \sp(K)$ is a $K$-rational Weierstrass point of $C$.

Let $R\subseteq K$ be a Dedekind domain with field of fractions $K$. The function field $K(C)$ of $C$ takes the form $K(C)=K(X)[Y]$, where
\begin{equation}\label{eq:hyper}
Y^{2}+f_2(X)Y=f(X),\ \ f(X), f_2(X)\in R[X]
\end{equation}
and $2g+1\leq \max \bigl( 2 \deg f_2(X), \deg f(X)\bigl)\leq 2g+2$, for $\deg w(X)$ the degree of any $w(X)\in R[X]$. We say that (\ref{eq:hyper}) is a hyperelliptic equation of $C$ over $R$. The normalization $\mathcal W(f,f_2)$  of the $\sp(R)$-scheme $\sp(R[X])\cup \sp(R[1/X])$ in  $K(C)$ is called a Weierstrass model of $C$ over $\sp(R)$.
This generalizes the well-known definition of Weierstrass models of elliptic curves over $\sp(R)$, see \cite[p.442]{liu:ag} or \cite[p.42]{silverman:aoes}.
To ease notation we write $\mathcal W(f)$ for $\mathcal W(f,f_2)$ if $f_2=0$ and we define somewhat crudely the height  $h(\mathcal W(f))$ of $\mathcal W(f)$ as the absolute logarithmic height (see \cite[1.6.1]{bogu:diophantinegeometry}) of $f\in R[X]$. We define the discriminant $\Delta$ of $\mathcal W(f,f_2)$ by
\begin{equation*}
\Delta=\begin{cases}
2^{4g}\Delta(f_0) & \textnormal{for } \deg f_0=2g+2\\
2^{4g}\alpha_0^2\Delta(f_0) & \textnormal{otherwise},
\end{cases}
\end{equation*}
where $f_0=f+f_2^2/4$ has leading coefficient $\alpha_0$ and discriminant $\Delta(f_0)$. This discriminant allows to study the reductions of $C$. More precisely, let $\mathfrak p\in\sp (R)$ be closed and let $R_\mathfrak p$ be its local ring. As usual we define that the curve $C$ has good reduction at $v$ if  $C$ is the generic fiber of a smooth proper scheme over $\sp(R_{\mathfrak p})$. This is equivalent to the existence of a smooth Weierstrass model of $C$ over $\sp (R_{\mathfrak p})$. We note that a Weierstrass model of $C$ over $\sp(R_{\mathfrak p})$ with discriminant $\Delta$ is smooth if and only if $v(\Delta)=0$.
The curve $C$ has good reduction outside a set of places $S$ of $K$ if it has good reduction at all finite places $v$ of $K$ which are not in $S$.

We remark that our explicit definition of $\Delta$ is not intrinsic. However, in  \cite{rvk:szpiro} we shall show that the more sophisticated discriminant of Deligne and Saito \cite[p.155]{saito:conductor} for arithmetic surfaces over $\sp(R_{\mathfrak p})$  of generic fiber $C$ can be controlled in terms of $\Delta$.

\section{Statement of the results}\label{secstat}

In this section we state the theorem and the corollary. We also discuss several aspects of our results and methods. In the sequel $\kkk$ denotes an effective absolute constant.

To state our results we have to introduce some notation. Let $K$ be a number field, with ring of integers $\OK$. We denote by $d$ the degree of $K$ over $\QQ$ and  by $D_K$ the absolute value of the discriminant of $K$ over $\QQ$. Let $g\geq 1$ be an integer. We write $$\nn=6(2g+1)(2g)(2g-1)d^2.$$
Let $S$ be a  finite set of places  of $K$. We denote by $\OS$ the ring of $S$-integers of $K$ and by $\OSt$ its unit group. Let $h_S$ be the class number of $\OS$ and let $h_K$ be the class number of $\OK$. We remark that $h_S\leq h_K$ and we write $\lambda_S=\log_2 h_S$. To measure $S$ we take
\begin{equation}\label{eq:parameters}
\sigma=s+\lambda_S+1,  \ \ p \ \text{ and } \ N_S=\prod N(v)
\end{equation}
with the product taken over the finite places $v\in S$, for  $s$ the number of finite places in $S$, $p$ the maximum of the residue characteristics of the finite places in $S$ and $N(v)$ the number of elements in the residue field of $v$. We now can state our main result. \\

\noindent{\bf Theorem.}
\emph{There is a finite set of places $\Sb$ of $K$ containing $S$ such that if $C$ is a hyperelliptic curve over $K$ of genus $g$ with good reduction outside $S$, then there is a Weierstrass model $\mathcal W(f)$ of $C$ over $\sp(\OK)$ with discriminant $\Delta\in \OSSt$. Furthermore,}
\begin{itemize}
\item[(i)] if $C$ has a $K$-rational Weierstrass point, then $f$ is monic separable of degree $2g+1$ and $h(\mathcal W(f)) \leq (\nn\sigma)^{5\nn\sigma}N_S^{\nn/2}D_K^{\nn (\lambda_S+1)/4},$
\item[(ii)] if $C$ has no $K$-rational Weierstrass point, then $f$ is separable of degree $2g+2$ and $h(\mathcal W(f)) \leq (\nn\sigma)^{\kkk(2\nn)^3\sigma^4}p^{(3\nn)^3\sigma^4}D_K^{(3\nn)^3\sigma^4}$.
\end{itemize}

The proof shows in addition that we can take in the theorem any set of places $\Sb\supseteq S$ of $K$ with the following properties:  $\Sb$ contains at most $d\sigma$ finite places,  $\OSS$ is a principal ideal domain, the residue characteristics $\ell$ of the finite places of $\Sb$ are at most $\max(2,p,\sqrt{D_K})$ and satisfy $2\ell\in \OSSt$, and $N_T\leq (2N_SD_K^{\lambda_S/2})^d$. Such sets $\Sb$ exist and they can be determined effectively. For example if $K=\QQ$, then we can take $\Sb=S\cup\{2\}$.

The above result holds for all elliptic and all smooth projective and geometrically connected genus $2$ curves  over $K,$ since they are hyperelliptic. By adding to $\Sb$ the places of $K$ above $3$ we can assume in the elliptic case after a suitable change of variables which does not increase our bounds that $$f(X)=X^3+a_4X+a_6.$$ Therefore the theorem generalizes the results for elliptic curves over $K$ of Coates, who covered in \cite{coates:shafarevich} the case $K=\QQ$, and of \cite{fuvkwu:elliptic} to arbitrary hyperelliptic curves over $K$. Since  $N_S\leq p^{ds}$, we see that our explicit bound (take in part (i) $g=1$, $K=\QQ$) is sharper in all quantities than the explicit one of Coates. Furthermore, the  effective bounds in \cite{fuvkwu:elliptic}, which are double exponential and only explicit in terms of $S$, are reduced to fully explicit polynomial bounds (take in part (i) $g=1$). We continue the discussion of the bounds below the corollary.

From our theorem we derive a completely effective Shafarevich conjecture \cite{shafarevich:conjecture} for hyperelliptic curves over number fields. The qualitative finiteness statement was proven by Shafarevich for elliptic curves, by Par{\v{s}}in \cite{parshin:shafarevich} for genus $2$ curves and by Oort \cite{oort:shafarevich} for arbitrary hyperelliptic curves of genus $g\geq 2$, and Merriman and Smart \cite{mesm:genus2,smart:shafarevich} used \cite{evgy:binaryforms} to compile a list of all genus $2$ curves over $\QQ$ with good reduction outside $\{2\}$, see also the survey article of Poonen \cite[Section 11]{poonen:computational}. In principle, the following corollary allows to compile such lists in much more general situations. We recall that $c$ denotes an effective absolute constant.\\

\noindent{\bf Corollary.}
\emph{The $K$-isomorphism classes $H_{K,S,g}$ of hyperelliptic curves of genus $g\geq 1$ over $K$ with good reduction outside $S$ can be determined effectively and their number $N(K,S,g)$ satisfies} $$N(K,S,g)\leq\exp((\nn\sigma)^{\kkk(2\nn)^3\sigma^4}p^{(3\nn)^3\sigma^4}D_K^{(3\nn)^3\sigma^4}).\medskip$$

In particular, this is an effective finiteness theorem of Shafarevich type for hyperelliptic curves over $K$ of genus one with no $K$-rational point. 
This corollary for more general genus one curves with no $K$-rational point would give the effective finiteness of the Tate-Shafarevich group of elliptic curves over $K$. The proof of the corollary shows in addition that the number of curves in $H_{K,S,g}$ with a $K$-rational Weierstrass point is at most
$$
\exp\bigl((\nn\sigma)^{6\nn\sigma}N_S^{\nn/2}D_K^{\nn (\lambda_S+1)/4}\bigl).
$$
Let $N(K,S,E)$ be the number of elliptic curves in $H_{K,S,1}$.  
Brumer and Silverman \cite{brsi:number} proved $N(\QQ,S,E)\leq c_1N_S^{c_2}$ for absolute constants $c_1,c_2$ (see also Helfgott and Venkatesh \cite{heve:integralpoints} for an improved $c_2$). For $g\geq 2$ one can derive a similar estimate on combining the geometric ideas of this paper with  Evertse's \cite{evertse:sunits}. It leads to $$\max(N(K,S,E),N(K,S,g))\leq c_3N_S^{c_4},$$ for $c_3$ a constant depending only on $K,g$ and $c_4$ an effective constant depending only on $g$.  
We plan to give the details in a future article.

We now explain the separation of the theorem in (i) and (ii) depending on whether a $K$-rational Weierstrass point of $C$ exists or not.
In part (i) we can reduce the problem directly to solve unit equations. This has the advantage that it leads to explicit estimates in (i), which depend directly on the at the moment best bounds for unit equations. But the method of (i) only works for monic polynomials and thus can not be applied in our proof of (ii). Therein we get from Proposition \ref{propgeom} (ii) a Weierstrass model $\mathcal W(f)$ with $f$ not necessarily monic.

Next we discuss our results in view of the nice paper \cite{jore:shafarevich} of de Jong and R\'emond. Let $C$ be a smooth, projective and geometrically connected curve over $K$ of genus $g$, with good reduction outside $S$. We denote by $h_F(J)$ the absolute stable Faltings height of the Jacobian $J=\textnormal{Pic}^0(C)$ of $C$. If there exists a finite morphism $C\to \mathbb P^1_K$ of prime degree which is geometrically a cyclic cover, then \cite{jore:shafarevich} gives an effective upper bound for $h_F(J)$ in terms of $K$, $S$ and $g$.  
A motivation for such results is given by R\'emond \cite{remond:construction}.
On using Kodaira's construction, he showed that an effective upper bound for $h_F(J)$, in terms of $K$, $S$ and $g$, would imply the effective Mordell Conjecture.
To prove their result, de Jong and R\'emond generalize the method of Par{\v{s}}in \cite{parshin:shafarevich} and Oort \cite{oort:shafarevich}
which worked before only for hyperelliptic curves. They replace therein the qualitative finiteness theorem  for unit equations of Siegel-Mahler by its effective analogue of Gy\H{o}ry and Yu \cite{gyyu:sunits}. This gives an affine plane model of $C$ with bounded height, which then allows to estimate $h_F(J)$ by combining results of Bost, David and Pazuki \cite{pazuki:heights} and of R\'emond \cite{remond:rational}. On the other hand, our method relies on the effective reduction theory of Evertse and Gy\H{o}ry \cite{evgy:binaryforms} and on results for Weierstrass models of hyperelliptic curves obtained by Lockhart \cite{lockhart:discriminant} and by Liu \cite{liu:models}. 
Hence, we conclude that the method of de Jong and R\'emond and our method are rather different. We point out that the main new feature of our method is that it allows to prove the following: Any hyperelliptic curve $C$ has a  Weierstrass model, with height effectively bounded in terms of $K,S,g$, which is defined over $\OK$. This immediately implies an effective Shafarevich theorem for hyperelliptic curves in the classical sense, i.e. which allows to determine effectively the $K$-isomorphism classes in question. On the other hand, the method of Par{\v{s}}in, Oort, de Jong and R\'emond (only) gives that any hyperelliptic curve $C$ has a Weierstrass model, with  height effectively bounded in terms of $K,S,g$, which is defined over a field extension $L$ of $K$. Although $L$ can be controlled, it is not clear if this weaker statement can be used to deduce our Theorem. Further, it is not clear if the method of Par{\v{s}}in, Oort, de Jong and R\'emond allows to determine effectively the $K$-isomorphism classes of hyperelliptic curves over $K$ of genus $g$, with good reduction outside $S$;  their method (only) allows to effectively determine the smaller set of $\bar{K}$-isomorphism classes, where $\bar{K}$ is an algebraic closure of $K$.

We now consider additional aspects of our method.  Theorem (i) together with \cite[Remarque]{jore:shafarevich} shows that if $J$ is the Jacobian of a hyperelliptic curve over $K$ of genus $g$, with a $K$-rational Weierstrass point and with good reduction outside $S$, then
\begin{equation}\label{eq:faltest}
h_F(J)\leq 2^{2^{22} 9^g}(\nn\sigma)^{5\nn\sigma}N_S^{\nn/2}D_K^{\nn (\lambda_S+1)/4}.
\end{equation}
This bound depends polynomially on the  important quantity $N_S$ (see below), while the estimate of \cite{jore:shafarevich} is of the form $\exp(c(K,g)(\log N_S)^2)$ for $c(K,g)$ an effective constant depending only on $K$ and $g$. In addition, our method  gives bounds of the form (\ref{eq:faltest}) for  Jacobians of curves over $K$  which correspond to function fields $K(X)[Y]$ with a relation $Y^m=f(X)$,
for $m\geq 2$ an integer and for $f$ as in Proposition \ref{propeff} (i); see the discussion at the end of the paper. 
Moreover, on exhausting our method further generalizations should be possible in view of the general results from the effective reduction theory of Evertse and Gy\H{o}ry. However, it is clear that substantially new ideas are required to deal with arbitrary Jacobian varieties.

Finally, we explain the shape of the estimates. They depend ultimately on the theory of  logarithmic forms and we refer to Baker and W\"ustholz \cite{bawu:logarithmicforms} in which the state of the art of this theory is documented. We conducted some effort to obtain in (i) a bound which in terms of $S$ is polynomial in $\sigma^{\sigma}$ and $N_S$. This leads immediately to an estimate in terms of $S$ which is polynomial in $N_S$  and that is what we need for the applications given in \cite{rvk:szpiro}. Of course, on accepting a worse dependence on $S$ one could improve the dependence on $d$, $D_K$ or on $g$. As a consequence of calculating  the constants explicitly in every step of our proofs we obtained $$\kkk=c_6c_7,$$
for the effective absolute constants $c_6, c_7$ of \cite[Theorem 3]{evgy:binaryforms}. We note that throughout this paper the constants are calculated according to Baker's philosophy: ``Although some care has been taken to obtain numerical constants reasonably close to the best that can be acquired with the present method of proof, there is, nevertheless, little doubt that the numbers in the above inequalities can be reduced to a certain extent by means of minor refinements. In particular it will be seen that several of the numbers occurring in our estimates have been freely rounded off in order that the final conclusion should assume a simple form, and so some obvious improvements are immediately obtainable."\\

\section{Weierstrass models with minimality properties}\label{secgeom}

In this section we start with a lemma which allows to construct effectively principal ideal domains in any number field $K$. Then, after collecting some known results for hyperelliptic curves $C$ over $K$, we give Lemma \ref{lemweip}. It describes a relation between Weierstrass models  and the existence of a $K$-rational Weierstrass point of $C$. In the last part we prove  Proposition \ref{propgeom} which gives a Weierstrass model of $C$ with certain minimality properties.

Let $S$ be a finite set of places of $K$ and let $\OS$ be the ring of $S$-integers of $K$.  We write $\lambda_S=\log_2 h_S$ for $h_S$ the class number of $\mathcal O_S$. Let $\ta$,  $N_S$ and $p$ be as in (\ref{eq:parameters}) and for any finite set of places $\Sb$ of $K$ let $\ts$, $N_\Sb$ and $\ps$ be the corresponding quantities.  Let $\OK$ be the ring of integers in $K$ and let $D_{K}$ be the absolute value of the discriminant of $K$ over $\QQ$.

The next lemma allows us later to remove class group obstructions in connection with the existence of certain Weierstrass models. We thank  Sergej Gorchinskiy for improving the upper bound for $\ts$ in \cite[Lemma 4.3]{fuvkwu:elliptic} to $t\leq s+h_S-1$, and we thank the referee for further improving $t\leq s+h_S-1$ to the estimate in Lemma \ref{lemsb}.
\begin{lemma}\label{lemsb}
There exists a finite set of places $\Sb\supseteq S$ of $K$ such that $N_\Sb\leq N_S D_K^{\lambda_S/2},$ $\ps\leq \max (p,\sqrt{D_{K}}),$ $t\leq \ta+\lambda_S$ and such that $\OSS$ is a principal ideal domain.
\end{lemma}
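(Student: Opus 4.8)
The plan is to realize $\OSS$ as a principal ideal domain by adjoining to $S$ a small set of prime ideals that kill the class group, controlling both their number and their norms via Minkowski's bound. The starting observation is that $\OSS$ is a principal ideal domain precisely when the $\Sb$-class group is trivial; writing $\textnormal{Cl}_K$ for the ideal class group of $\OK$, this amounts to asking that the classes of the finite primes in $\Sb$ generate $\textnormal{Cl}_K$. Equivalently, setting $\textnormal{Cl}_S=\textnormal{Cl}_K/\langle[\pid]:\pid\in S\rangle$, which is a finite abelian group of order $h_S$, I need the images in $\textnormal{Cl}_S$ of the primes in $\Sb\setminus S$ to generate $\textnormal{Cl}_S$.

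First I would invoke Minkowski's bound: every ideal class of $\OK$ contains an integral ideal $\aid$ of norm at most the Minkowski constant $M_K$, and a direct estimate of this constant gives $M_K\leq\sqrt{D_K}$ for every $K$. Factoring such an $\aid$ into primes shows that the prime ideals $\pid$ with $N(\pid)\leq\sqrt{D_K}$ generate $\textnormal{Cl}_K$, hence their images generate $\textnormal{Cl}_S$. Thus there is a generating set of $\textnormal{Cl}_S$ consisting of images of primes of norm at most $\sqrt{D_K}$.

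The key step is then to extract from this generating set a subset of cardinality at most $\lambda_S=\log_2 h_S$. I would proceed greedily: starting from the trivial subgroup of order $1$, repeatedly adjoin a generator lying outside the subgroup built so far. Each adjunction strictly enlarges the subgroup, so by Lagrange its order at least doubles; after $k$ adjunctions the subgroup has order at least $2^k$, and once it equals $\textnormal{Cl}_S$ we obtain $2^k\leq h_S$, i.e. $k\leq\lambda_S$. This doubling argument is exactly what improves the earlier bound $t\leq\ta+h_S-1$ to $t\leq\ta+\lambda_S$, and it is the only genuinely nontrivial point; the rest is bookkeeping. Note in particular that this route never requires finding a prime of bounded norm in a \emph{prescribed} class—one generates all of $\textnormal{Cl}_S$ with the small primes and only afterwards prunes the generating set.

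Finally, set $\Sb=S\cup P$, where $P$ is the extracted set of at most $\lambda_S$ finite primes, each of norm at most $\sqrt{D_K}$. By construction the classes of the finite primes of $\Sb$ generate $\textnormal{Cl}_K$, so $\OSS$ is a principal ideal domain, and since $P$ adds at most $\lambda_S$ finite places one has $t\leq\ta+\lambda_S$. As $N_\Sb=N_S\prod_{\pid\in P\setminus S}N(\pid)$ with at most $\lambda_S$ factors, each at most $\sqrt{D_K}$, I get $N_\Sb\leq N_S D_K^{\lambda_S/2}$. Lastly every residue characteristic of a prime in $P$ is bounded by its norm, hence by $\sqrt{D_K}$, while those coming from $S$ are at most $p$, giving $\ps\leq\max(p,\sqrt{D_K})$. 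All steps are effective, since the finitely many primes of norm at most $\sqrt{D_K}$ and their classes in $\textnormal{Cl}_S$ can be listed and the greedy selection carried out explicitly.
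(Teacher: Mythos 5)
Your proof is correct, and it rests on the same two essential ingredients as the paper's: the Minkowski bound, which is at most $\sqrt{D_K}$, to produce class-group generators of small norm, and a doubling argument to get the count $\lambda_S=\log_2 h_S$. The organization is genuinely different, though. The paper argues by induction on $h_S$: it picks an arbitrary non-trivial class of $Cl(\OS)$, lifts it along the surjection $Cl(\OK)\to Cl(\OS)$, represents the lift by an integral ideal all of whose prime factors have norm at most $\sqrt{D_K}$, and extracts one prime factor $\pid$ whose image in $Cl(\OS)$ is non-trivial; since the class number of $\mathcal O_{S\cup\{\pid\}}$ divides $h_S$ and is strictly smaller, it is at most $h_S/2$, and the induction closes. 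You instead generate all of $\textnormal{Cl}_K$ at once by the primes of norm at most $\sqrt{D_K}$ and then prune greedily, with Lagrange's theorem applied to the increasing chain of subgroups playing exactly the role that divisibility of the successive quotient class numbers plays in the paper — the two doubling arguments are dual (subgroup orders versus quotient orders). Both routes are effective and yield identical bounds on $t$, $N_\Sb$ and $\ps$. Your version has the mild advantage of cleanly separating the Minkowski input from the combinatorial selection; the paper's version never needs to list all primes below the Minkowski bound, only one small representative per induction step. Your parenthetical remark that one need not find a prime of bounded norm in a prescribed class applies equally to the paper's argument, which also only needs some prime factor with non-trivial image.
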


\begin{proof}
Let $Cl(R)$ denote the class group of a Dedekind domain $R\subset K$. If $h_S=1$, then we take $T=S$. We do induction and we now assume that the statement is true for any finite set of places $S'$ of $K$ with corresponding class number $h_{S'}$ at most $h_S/2$. Let $\pi:Cl(\OK)\to Cl(\OS)$ be the canonical surjective homomorphism and for a non-trivial $\bar{\mathfrak a}\in Cl(\OS)$ we take  $\bar{\mathfrak b}\in Cl(\OK)$ with $\pi(\bar{\mathfrak b})=\bar{\mathfrak a}$. Minkowski's theorem gives a representative $\mathfrak b\subset \OK$ of $\bar{\mathfrak b}$ such that the residue field of any $\mathfrak p\in \sp(\OK)$ which divides $\mathfrak b$ has at most $\sqrt{D_K}$ elements. 
Since $\bar{\mathfrak a}$ is non-trivial and $\pi$ is a homomorphism there exists such a $\mathfrak p$ with $\pi(\bar{\mathfrak p})$ non-trivial, where $\bar{\mathfrak p}$ is the ideal class of $\mathfrak p$. 
We write $S'=S\cup \lbrace\mathfrak p\rbrace$ and then we see that the class number $h_{S'}$ of $\mathcal O_{S'}$ is at most $h_S-1$, since $\pi(\bar{\mathfrak p})$ lies in the kernel of the canonical surjective homomorphism $Cl(\OS)\rightarrow Cl(\mathcal O_{S'})$. 
Thus, on using that $h_{S'}$ divides $h_S$, we deduce that $h_{S'}\leq h_S/2$. Finally, an application of the induction hypothesis with $S'$ gives a set of places $T$ of $K$ with the desired properties.\end{proof}

Let $C$ be a hyperelliptic curve of genus $g$ defined over $K$. Let $R$ be a Dedekind domain with quotient field $K$ and with group of units $R^\times$. The following lemma is a direct consequence of \cite[Proposition 2]{liu:models}.

\begin{lemma}\label{lemliu}
Suppose that $R$ is a principal ideal domain and assume that $C$ has good reduction at all closed points in $\sp(R)$. Then there exists a Weierstrass model of $C$ over $\sp(R)$ with discriminant in $R^\times$.
\end{lemma}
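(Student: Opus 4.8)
The plan is to produce a globally minimal Weierstrass model of $C$ over $\sp(R)$ and then to read off from the good reduction hypothesis that its discriminant is a unit. First I would work locally. Fix a closed point $\mathfrak p\in\sp(R)$ with local ring $R_{\mathfrak p}$, which is a discrete valuation ring, and write $v=v_{\mathfrak p}$ for its valuation. Any Weierstrass model of $C$ over $\sp(R_{\mathfrak p})$ has discriminant $\Delta\in R_{\mathfrak p}$ and hence $v(\Delta)\geq 0$, so the minimal value of $v(\Delta)$ taken over all Weierstrass models of $C$ over $\sp(R_{\mathfrak p})$ is a well-defined non-negative integer. Since $C$ has good reduction at $\mathfrak p$, there is a smooth Weierstrass model of $C$ over $\sp(R_{\mathfrak p})$, and by the characterization recalled in Section \ref{secprelim} a Weierstrass model over $\sp(R_{\mathfrak p})$ is smooth precisely when $v(\Delta)=0$. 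Therefore this minimal discriminant valuation equals $0$ at every closed point $\mathfrak p$ of $\sp(R)$.

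The key step is to realize all of these local minima simultaneously by a single model defined over $R$. Here I would invoke \cite[Proposition 2]{liu:models}, whose content is exactly the existence of a Weierstrass model $\mathcal W(f,f_2)$ of $C$ over $\sp(R)$ that is minimal at every closed point of $\sp(R)$, i.e. such that $v_{\mathfrak p}(\Delta)$ equals the minimal discriminant valuation at $\mathfrak p$ for every $\mathfrak p$. This is where the hypothesis that $R$ is a principal ideal domain becomes essential: local minimal models always exist over the discrete valuation rings $R_{\mathfrak p}$, but patching them into one global model over $R$ is in general obstructed by a class in the ideal class group of $R$, and the assumption that $R$ is a principal ideal domain forces this obstruction to vanish.

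Combining the two steps, the discriminant $\Delta\in R$ of the model $\mathcal W(f,f_2)$ furnished by \cite[Proposition 2]{liu:models} satisfies $v_{\mathfrak p}(\Delta)=0$ at every closed point $\mathfrak p\in\sp(R)$. Moreover $\Delta\neq 0$, since a model with good reduction has non-vanishing discriminant. In a Dedekind domain an element is a unit if and only if it has valuation zero at every nonzero prime ideal, so I conclude $\Delta\in R^\times$, as desired. The main obstacle in this argument is the second step, namely the passage from the local minimal models to a single global one over $\sp(R)$: it is precisely this gluing, together with its dependence on the triviality of the class group of $R$, that constitutes the substance of Liu's proposition and that makes the principal ideal domain hypothesis indispensable.
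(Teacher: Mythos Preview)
Your proposal is correct and follows the same approach as the paper, which simply records that the lemma is a direct consequence of \cite[Proposition 2]{liu:models}. You have merely unpacked why it is a direct consequence: good reduction forces the local minimal discriminant valuations to be zero, and Liu's proposition supplies a single model over the principal ideal domain $R$ realizing these local minima simultaneously.
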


Let $V^{2}=l(Z)$   and  $Y^{2}=f(X)$ be two hyperelliptic equations of $C$ over $K$ with discriminant  $\Delta'$ and $\Delta$ respectively. Then \cite[p.4581]{liu:models} gives
\begin{equation*}
\phi=\begin{pmatrix} \alpha & \beta\\ \gamma & \delta \end{pmatrix}\in \textnormal{GL}_{2}(K), \ \lambda\in K^{\times}
\end{equation*}
such that
\begin{equation*}
X=\phi Z=\frac{\alpha Z+\beta}{\gamma Z+\delta}, \ \ Y=\frac{\lambda V}{(\gamma Z+\delta)^{g+1}}
\end{equation*}
and that
\begin{equation}\label{eq:discritransform}
\Delta=\lambda^{4(2g+1)}(\det \phi)^{-2(g+1)(2g+1)}\Delta',
\end{equation}
where $\textnormal{GL}_{2}(K)$ are the invertible $2\times 2$-matrices with entries in $K$ and $\det \phi$ denotes the determinant of $\phi\in\textnormal{GL}_{2}(K)$.

Next, we discuss a relation between $K$-rational Weierstrass points and Weierstrass models of $C$ over $\sp(R)$. We are grateful to Professor Qing Liu who pointed out that the following Lemma \ref{lemweip} is well-known (see for example the results at the bottom of \cite[p.4579]{liu:models}).

\begin{lemma}\label{lemweip}
Suppose $\mathcal W(f)$ is a Weierstrass model of $C$ over $\sp(R)$. If $C$ has no $K$-rational Weierstrass point, then $f\in R[X]$ has degree $2g+2$.
\end{lemma}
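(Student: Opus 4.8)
The plan is to prove the contrapositive: assuming $\deg f=2g+1$, I will exhibit a $K$-rational Weierstrass point of $C$. Since $\mathcal W(f)=\mathcal W(f,0)$, the relation defining $K(C)$ is $Y^2=f(X)$ with $f\in R[X]$, and with $f_2=0$ the degree constraint on hyperelliptic equations reduces to $2g+1\leq \deg f\leq 2g+2$. Thus $\deg f\in\{2g+1,2g+2\}$, and it suffices to show that $\deg f=2g+1$ produces a $K$-rational Weierstrass point. Note that only the generic fibre $C$ over $K$ is relevant here: the degree of $f$ and the Weierstrass points are read off over $K$, so no hypothesis on $R$ beyond the given one is needed.

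First I would analyse the fibre of $\varphi:C\to\PP^1_K$ above $X=\infty$ by passing to the chart $\sp(R[1/X])$ of the model. Put $U=1/X$ and $V=Y/X^{g+1}$; then $V$ is integral over $K[U]$, and a direct substitution gives $V^2=U^{2g+2}f(1/U)=U\,h(U)$, where $h(U)=a_{2g+1}+a_{2g}U+\dots+a_0U^{2g+1}$ and $a_{2g+1}\neq 0$ is the leading coefficient of $f$. Because $h(0)=a_{2g+1}\neq 0$, the affine curve $V^2=U\,h(U)$ is smooth at $(U,V)=(0,0)$ (the partial derivative in $U$ equals $-h(0)\neq0$ there), so there is a unique point $P_\infty\in C$ over $X=\infty$. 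Moreover the order of $V^2$ at $U=0$ is $1$, which is odd, so $X=\infty$ is totally ramified in the degree-$2$ extension $K(C)/K(X)$; hence $P_\infty$ has ramification index $2$ and residue field $K$, i.e. it is a $K$-rational point and so a section of $C\to\sp(K)$.

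It remains to identify $P_\infty$ as a Weierstrass point. For a degree-$2$ morphism the ramification points of $\varphi$ are exactly the fixed points of the nontrivial element of $\Gal(K(C)/K(X))$, which is the involution $-\textnormal{id}$; in the coordinates $(U,V)$ this involution is $(U,V)\mapsto(U,-V)$, which visibly fixes $P_\infty=(0,0)$. When $g\geq 2$ this shows immediately that $P_\infty$ is a $K$-rational Weierstrass point in the sense of section \ref{secprelim}. When $g=1$ every section of $C\to\sp(K)$ is by definition a $K$-rational Weierstrass point, so the $K$-rationality of $P_\infty$ already suffices. In either case $C$ has a $K$-rational Weierstrass point, which is the desired contradiction with the hypothesis, completing the contrapositive.

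This is essentially the standard local computation at infinity, so the main obstacle is not conceptual but a matter of care in the bookkeeping: verifying that $V=Y/X^{g+1}$ (rather than $Y$) is the correct integral coordinate describing the normalization over $\sp(R[1/X])$, and that the odd vanishing order of $V^2$ at $U=0$ genuinely forces total ramification and hence residue degree $1$ — this is exactly what yields the $K$-rationality of $P_\infty$. I would also keep the two regimes $g=1$ and $g\geq 2$ separate throughout, since the definition of a $K$-rational Weierstrass point differs between them.
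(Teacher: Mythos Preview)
Your argument is correct. The paper itself does not prove this lemma: it merely records that the statement is well-known and refers to \cite[p.~4579]{liu:models} for a proof. Your direct computation at the point at infinity is the standard way to see the result, and all the steps check out: with $U=1/X$ and $V=Y/X^{g+1}$ one obtains $V^{2}=U\,h(U)$ with $h(0)=a_{2g+1}\neq 0$; the affine plane curve $V^{2}-U\,h(U)=0$ is smooth at $(0,0)$, so the normalization is unchanged there and the unique point $P_\infty$ above $X=\infty$ is $K$-rational; and the hyperelliptic involution $V\mapsto -V$ visibly fixes it. Your remark that only the generic fibre matters is also correct, so there is no need to worry about $R$ beyond $R\subseteq K$.

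One minor comment on phrasing: when you say ``the order of $V^{2}$ at $U=0$ is $1$, which is odd, so $X=\infty$ is totally ramified'', it would be cleaner to say that the polynomial $U\,h(U)\in K[U]$ vanishes to odd order at $U=0$, hence the quadratic extension $K(U)[V]/(V^{2}-U\,h(U))$ is ramified over $U=0$; or equivalently, since the smooth fibre computation already shows there is a single $K$-point over $U=0$, the ramification index is forced to be $2$ by the degree formula. Either way the conclusion is the same.
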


Let $S$ and $\Sb$ be finite sets of places of $K$. Let $\Sigma$ be a generating system  of the free part of the units $\OSSt$ of the ring of $\Sb$-integers $\OSS$ and let $\zeta$ be a generator of the torsion part of $\OSSt$. We write $\mathcal U=(\Sigma,\zeta)$ and we say that $\epsilon\in \OSSt$ is $\mathcal U$-reduced if it takes the form $\epsilon=\zeta^{r}\prod_{\epsilon\in \Sigma}\epsilon^{r(\epsilon)}$, for integers $0\leq r,r(\epsilon)<4(g+1)(2g+1)$. Let $n_\Sb$ be the product $\prod \log N(v)$ taken over the finite places $v\in T$ for $N(v)$ the number of elements in the residue field of $v$ and let $d$ be the degree of $K$ over $\QQ$. Let $h(\alpha)$ be the absolute logarithmic height (see \cite[1.6.1]{bogu:diophantinegeometry}) of $\alpha\in K$.

\begin{proposition}\label{propgeom}
Suppose $T\supseteq S$ and $\mathcal{O}_\Sb$ is a principal ideal domain with $2\in \OSSt$. Let $\mathcal U$ be as above and let $C$ be a hyperelliptic curve over $K$ of genus $g$ with good reduction outside $S$. There is a Weierstrass model $\mathcal W(f)$ of $C$ over $\sp(\OSS)$ with $\mathcal U$-reduced discriminant $\Delta\in\OSSt$ such that
\begin{itemize}
\item[(i)] if $C$ has a $K$-rational Weierstrass point, then $f$ is separable and monic of degree $2g+1$,
\item[(ii)] if $C$ has no $K$-rational Weierstrass point, then $f$  is separable of degree $2g+2$.
\end{itemize}
Moreover, there is a $\mathcal U$ as above such that any $\mathcal U$-reduced $\Delta\in \OSSt$ satisfies $$h(\Delta)\leq(50g(\ts+d)!)^2(dD_K)^{d}n_\Sb.$$
\end{proposition}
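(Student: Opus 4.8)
The plan is to build the model in four moves: produce a Weierstrass model over $\sp(\OSS)$ with unit discriminant by Lemma \ref{lemliu}, normalize its shape (completing the square and fixing the degree and leading coefficient as in (i)/(ii)), replace its discriminant by a $\mathcal U$-reduced one using the transformation formula (\ref{eq:discritransform}), and finally deduce the height bound from a well-chosen fundamental system of units. First I would apply Lemma \ref{lemliu}: since $\Sb\supseteq S$ the curve $C$ has good reduction at every closed point of $\sp(\OSS)$, and $\OSS$ is a principal ideal domain, so $C$ has a Weierstrass model over $\sp(\OSS)$ with discriminant in $\OSSt$. Using $2\in\OSSt$, completing the square replaces the defining equation by one of the form $Y^2=f(X)$, $f\in\OSS[X]$, i.e.\ a model $\mathcal W(f)$ with $f_2=0$ and the same discriminant $\Delta\in\OSSt$; as $\Delta\neq 0$ and $\Delta$ is $2^{4g}\alpha_0^2\Delta(f_0)$ or $2^{4g}\Delta(f_0)$ with $f_0=f$, the discriminant $\Delta(f)$ is nonzero, so $f$ is separable.

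Next I would fix the degree. In case (ii) Lemma \ref{lemweip} gives $\deg f=2g+2$ directly. In case (i), if $\deg f=2g+1$ already I proceed to the monicity step; otherwise $\deg f=2g+2$ and a $K$-rational Weierstrass point $P$ yields a root $x_0=\varphi(P)\in K$ of $f$. Writing $x_0=a/b$ with $a,b\in\OSS$ coprime and completing $(a,b)$ to a matrix in $\slos$ (possible as $\OSS$ is a PID), I move $x_0$ to $\infty$ by a transformation as in the paragraph preceding (\ref{eq:discritransform}) that lies in $\glos$ and hence preserves integrality; having $\det\phi\in\OSSt$, by (\ref{eq:discritransform}) it keeps $\Delta\in\OSSt$, while the branch point now at $\infty$ forces $\deg f=2g+1$. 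For this odd degree $\Delta=2^{4g}\alpha_0^2\Delta(f_0)$ shows $\alpha_0\in\OSSt$, so the scaling $X=\alpha_0^{-1}X'$, $Y=\alpha_0^{-g}Y'$ turns $f$ into a monic separable polynomial of degree $2g+1$ over $\OSS$.

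To make $\Delta$ be $\mathcal U$-reduced I would again use (\ref{eq:discritransform}), choosing transformations that multiply $\Delta$ by a controlled subgroup of $\OSSt$ while preserving the shape of $f$. In case (ii) the substitution $Y=\lambda V$ with $\lambda\in\OSSt$ leaves $\deg f=2g+2$ and multiplies $\Delta$ by $\lambda^{4(2g+1)}$; in case (i) the monicity-preserving scaling $X=c^2Z$, $Y=c^{2g+1}V$ with $c\in\OSSt$ multiplies $\Delta$ by $c^{4g(2g+1)}$. Since $4(2g+1)$ and $4g(2g+1)$ are both at most $4(g+1)(2g+1)$, I can reduce each exponent of $\Delta$ in its $\mathcal U$-representation modulo the available modulus, and the torsion exponent modulo the gcd of that modulus with $\ord(\zeta)$; every exponent then lies in $[0,4(g+1)(2g+1))$, so the resulting $\Delta\in\OSSt$ is $\mathcal U$-reduced and $f$ keeps the form from (i)/(ii).

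For the closing estimate I would choose $\mathcal U$ so that $\Sigma$ is an effectively bounded fundamental system of the free part of $\OSSt$, whose rank $r$ satisfies $r\leq \ts+d-1$. Since $h$ vanishes on roots of unity, every $\mathcal U$-reduced $\Delta=\zeta^{r_0}\prod_{\epsilon\in\Sigma}\epsilon^{r(\epsilon)}$ obeys $h(\Delta)\leq\sum_{\epsilon}r(\epsilon)h(\epsilon)<4(g+1)(2g+1)\sum_{\epsilon\in\Sigma}h(\epsilon)$, so the task reduces to bounding $\sum_{\epsilon}h(\epsilon)$. Here I would invoke the standard geometry-of-numbers construction of a reduced fundamental system, which bounds $\sum_{\epsilon}h(\epsilon)$ by the $\Sb$-regulator $R_\Sb$ up to a factorial cost in $r$, and then insert $R_\Sb\leq h_KR_Kn_\Sb$ together with effective upper bounds for $h_KR_K$ in terms of $d$ and $D_K$; tracking the constants yields $(50g(\ts+d)!)^2(dD_K)^dn_\Sb$. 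This last paragraph is where I expect the main obstacle: the geometric steps only use the cited lemmas and (\ref{eq:discritransform}), but obtaining a fundamental system of $\Sb$-units with explicitly bounded heights, controlling $R_\Sb$ by $D_K$ and $n_\Sb$, and carrying all numerical constants through to the stated fully explicit bound is the delicate part.
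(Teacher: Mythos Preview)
Your treatment of (ii), the $\mathcal U$-reduction, and the final height estimate is essentially the paper's: Lemma \ref{lemliu} supplies the model, Lemma \ref{lemweip} forces degree $2g+2$, formula (\ref{eq:discritransform}) reduces the exponents, and the height bound comes from a fundamental system of $\Sb$-units controlled via the $\Sb$-regulator, $R_\Sb\leq h_KR_Kn_\Sb$, and Lenstra's bound for $h_KR_K$---exactly as in the paper (which cites \cite[Lemma 2, Remark 3]{gyyu:sunits} and \cite[Theorem 6.5]{lenstra:algorithms}).

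For (i) you take a genuinely different route. The paper invokes Lockhart's theory of the Weierstrass class and minimal discriminant \cite[Proposition 2.8]{lockhart:discriminant} to obtain directly a monic degree-$(2g+1)$ model over $\OK$ whose discriminant is a $\Sb$-unit, and only then completes the square over $\OSS$. You instead apply Lemma \ref{lemliu} uniformly in both cases and, when the resulting $f$ has degree $2g+2$, send a $K$-rational root of $f$ to infinity by an element of $\slos$, then scale to monic using that the leading coefficient divides the unit $\Delta$. Your approach is more elementary---it avoids the Weierstrass-class machinery entirely and treats (i) and (ii) symmetrically---while the paper's route has the incidental advantage of producing a model over $\OK$ before base change (though only an $\OSS$-model is needed here).

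There is, however, one gap in your (i). For $g\geq 2$ the Weierstrass points are precisely the ramification points of the hyperelliptic map, so a $K$-rational Weierstrass point does yield a $K$-rational root of $f$ when $\deg f=2g+2$, and your M\"obius move is valid. But the paper's convention for $g=1$ declares \emph{every} $K$-rational point a Weierstrass point; such a point need not lie over a root of $f$ for the given degree-$2$ map, so your step ``$\varphi(P)$ is a root of $f$'' can fail. The fix is routine: for $g=1$ a genus-one curve with a $K$-point is an elliptic curve, and the standard Riemann--Roch construction (choosing the double cover via $|2P|$) produces a monic cubic Weierstrass model over the PID $\OSS$ with unit discriminant. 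With that separate handling of $g=1$ your argument goes through.
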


\begin{proof}
We now take a hyperelliptic curve $C$ over $K$ of genus $g \geq 1$ with good reduction outside $S$. Since $\Sb$ contains $S$ we conclude that our curve $C$ has a fortiori good reduction outside $\Sb$.

(i) We  suppose that $C$ has a $K$-rational Weierstrass point. In a first step we construct a  Weierstrass model  of $C$ over $\sp(\OSS)$ with discriminant invertible in $\OSS$. Let $\mathfrak a\subseteq \OK$ be a representative of the Weierstrass class (see \cite[Definition 2.7]{lockhart:discriminant}) of $C$. Since $\OSS$ is a principal ideal domain we get $\alpha\in \OK$ and a fractional ideal $\mathfrak a_T$ of $\OK$ in $K$, which is composed only of primes in $\Sb$, such that
$\mathfrak a=\alpha \mathfrak a_T$.
After multiplying this ideal equation by a suitable $\Sb$-unit in $\OK$ we see that there is a representative $\mathfrak b\subseteq \OK$ of the Weierstrass class of $C$ which is composed only of primes in $\Sb$. An application of \cite[Proposition 2.8]{lockhart:discriminant} gives a Weierstrass model $\mathcal W(l,l_2)$ of $C$ over $\sp(\OK)$ with discriminant $\Delta'$ and with the following properties. The degree of $l_2$ is at most $g$, $l$ is monic of degree $2g+1$ and
$$\Delta' \OK=\mathfrak b^{4g(2g+1)}\mathfrak D_C,$$
for $\mathfrak D_{C}$ the minimal discriminant ideal of $C$ (see \cite[Definition 2.5]{lockhart:discriminant}). To see that $\Delta'\in\OSSt$ we let $\mathfrak p\in\sp(\OSS)$ be any closed point and we denote by $\mathcal O_\mathfrak p$ its local ring. Since our curve $C$ has good reduction outside $\Sb$ we get a Weierstrass model $\mathcal W$ of $C$ over $\sp(\Op)$ which is smooth. In particular this shows that the special fiber of $\mathcal W$ is smooth over the spectrum of the residue field at $\mathfrak p$. Therefore $\mathfrak p$ does not divide the discriminant of $\mathcal W_\mathfrak p$ and thus does not divide the minimal discriminant $\mathfrak D_C$ as well. Then the above representation of $\Delta' \OK$ and our choice of $\mathfrak b$ give that $\Delta'$ is not divisible by $\mathfrak p$. This proves that $\Delta'\in\OSSt$. Since $2\in \OSSt$ we see that the base change of $\mathcal W(l,l_2)$ to $\sp(\OSS)$ takes the form of a Weierstrass model $\mathcal W(l_0)$ of $C$ over $\sp(\OSS)$ with discriminant $\Delta'$, for $l_0=l+l_2^2/4$. We recall that $l$ is monic of degree $2g+1$ and that $l_2$ has degree at most $g$. This shows that $l_0$ is monic with degree $2g+1$.

In a second step we reduce the discriminant. Since $\Delta'\in \OSSt$ there exist integers $a,a(\epsilon)$ such that $\Delta'$ takes the form
$
\Delta'=\zeta^a \prod \epsilon^{a(\epsilon)}
$
with the product taken over $\epsilon\in \Sigma$. By reducing the exponents $a,a(\epsilon)$ modulo $4g(2g+1)$ we can rewrite the above equation  as $$\Delta'=\omega^{-4g(2g+1)}\zeta^r \prod_{\epsilon \in \Sigma} \epsilon^{r(\epsilon)}, \ \ 0\leq r,r(\epsilon)<4g(2g+1)$$ with $\omega\in \OSSt$ and integers $r,r(\epsilon)$. For $f(X)=\omega^{4g+2}l_0(X/\omega^{2})$ let
$\mathcal W(f)$ be the corresponding Weierstrass model of $C$ over $\sp(\OSS)$. By (\ref{eq:discritransform}) it has  $\mathcal U$-reduced discriminant $\omega^{4g(2g+1)}\Delta'\in \OSSt$ and the properties of $l_0$ imply that $f$ is monic with degree $2g+1$.

(ii) We now assume that $C$ has no $K$-rational Weierstrass point. Since our curve $C$ has good reduction outside $\Sb$ it has  good reduction at all closed points in $\sp(\OSS)$.  Then an application of Lemma \ref{lemliu} with $R=\OSS$  gives a Weierstrass model $\mathcal W(l,l_2)$ of $C$ over $\sp(\OSS)$ with discriminant $\Delta'\in \OSSt$. As in (i) we get a Weierstrass model $\mathcal W(l_0)$ of $C$ over $\sp(\OSS)$ with discriminant $\Delta'$, where $l_0=l+l_2^2/4$. Our assumption and Lemma \ref{lemweip} show that $l_0$ has degree $2g+2$. Next we reduce, in the same way as in (i), with a suitable $ \omega\in \OSSt$  the exponents of $\Delta'$ modulo $4(g+1)(2g+1)$. Let $f(X)=\omega^{4g+4}l_0(X/\omega^{2})$ and then we see that the Weierstrass model $\mathcal W(f)$ of $C$ over $\sp(\OSS)$ has the desired properties.

It remains to prove the last statement. We claim that for any finite set of places $S'$ of $K$ there is a generating system $\Sigma'$ of the free part of $\mathcal O_{S'}^{\times}$ with
\begin{equation}\label{eq:effgen}
h(\epsilon)\leq (10\left|\Sigma'\right|!)^2 (dD_K)^{d}n_{S'},  \ \ \ \epsilon\in \Sigma',
\end{equation}
for $\left|\Sigma' \right|$ the cardinality of $\Sigma'$. To see this let $R_{S'}$ be the $S'$-regulator, $R_K$ the regulator of $K$ and $h_K$ the class number of $K$. From \cite[Remark 3]{gyyu:sunits} we get that $R_{S'}\leq R_{K}h_{K}n_{S'}$ and \cite[Theorem 6.5]{lenstra:algorithms} shows that $$R_{K}h_K\leq \sqrt{D_K}\max(2d,d\log D_K)^{d-1}/(d-1)!.$$ Hence we deduce
\begin{equation}\label{eq:regulator}
R_{S'} \leq D_K^{1/2}\max(2d,d\log D_K)^{d-1}n_{S'}/(d-1)!.
\end{equation}
Then the claim follows, since \cite[Lemma 2]{gyyu:sunits} gives a generating system $\Sigma'$ of the free part of $\mathcal O_{S'}^{\times}$ such that $$h(\epsilon)\leq (10\left|\Sigma'\right|!)^2\max(1,\log d) R_{S'},\epsilon\in \Sigma'.$$
We choose  $\mathcal U=(\Sigma,\zeta)$ such that $\Sigma$ is a generating system  of the free part of $\OSSt$ with heights bounded as above.  Since $\Delta\in \OSSt$ is $\mathcal U$-reduced it takes the form $\zeta^r \prod_{\epsilon \in \Sigma} \epsilon^{r(\epsilon)}$ for integers $0\leq r, r(\epsilon)<4(g+1)(2g+1)$. Therefore the above bound for $h(\epsilon)$ together with  $\left|\Sigma\right|=\ts+d-1$ leads to the last statement. This completes the proof of the proposition.
\end{proof}

To apply this proposition in our proof  we shall extend $S$ to a set $\Sb$ such that $\OSS$ is a principal ideal domain. By Lemma \ref{lemsb} this can be done in a controlled way  with the disadvantage inherent that then $T$ depends  on $K$.

We now briefly discuss two further ideas and we let $C$ be as in the proposition above. The first idea is to use the Hilbert class field $H(K)$ of $K$. This field is  controlled in terms of $K$ and it seems that one can avoid an extension of $S$ by working with the base change of $C$ to $H(K)$ to get an equation over $H(K)$ with minimality properties.

The second idea is due to Par{\v{s}}in \cite{parshin:shafarevich}. There is a smallest finite extension $L\supseteq K$ such that the base change $C_L$ of $C$ to $L$ has $2g+2$ $L$-rational Weierstrass points. With the help of a fixed $L$-rational Weierstrass point we embed $C_L$ in its Jacobian over $L$ which extends to an abelian scheme  $\mathcal J\rightarrow\sp(\mathcal O)$, where $\mathcal O$ denotes the integral closure of $\OS[1/2]$ in $L$. Then on using that the Weierstrass points give $2$-torsion points on $\mathcal J$ we get a Weierstrass model of  $C_L$ over a localization of $\sp(\mathcal O)$. Its discriminant and also $L$ can be controlled in terms of $K,S,g$, since all places of $L$ that extend a closed point in $\sp(\OS[1/2])$ are unramified over $K$.

Probably, one can use these ideas to improve the estimates in some special cases as for example when $C$ is semi-stable over $K$. But it is clear that in general both approaches give no equation for $C$ over $K$.

\section{Binary forms and monic polynomials with given discriminant}\label{seceff}

In the first part of this section we collect some elementary results for binary forms over any number field $K$. In the second part we prove  Proposition \ref{propeff}.  It gives effective bounds for the height of certain binary  forms  and monic polynomials over $K$ of given discriminant.

Let $n\geq 1$ be an integer. Over a field extension of $K$ the binary form $$G(X,Y)=\sum_{0\leq i\leq n} \beta_iX^{n-i}Y^{i}\in K[X,Y]$$ factors as $\prod_{1\leq j\leq n}(\zeta_jX-\xi_jY)$ and we define the discriminant $\Delta(G)$ of $G$ by
\begin{equation*}
\Delta(G)=\prod_{1\leq i < j\leq n}(\zeta_i\xi_j-\zeta_j\xi_i)^2.
\end{equation*}
It has the properties (see \cite[p.169]{evgy:binaryforms}) that $\Delta(G)\in \ZZ[\beta_0,\dotsc,\beta_n]$ and that $\Delta(\alpha G)=\alpha^{2n-2}\Delta(G)$, for $\alpha\in K$. The pullback $\psi^*G$ of $G$ by $\psi \in \textnormal{GL}_2(K)$ can be written as
\begin{equation*}
\psi^*G(X,Y)=G(\alpha X+\beta Y,\gamma X+\delta Y) \textnormal{ for } \psi=\begin{pmatrix}\alpha & \beta \\ \gamma & \delta\end{pmatrix}\in \textnormal{GL}_2(K)
\end{equation*}
and has discriminant $\Delta(\psi^*G)=(\det \psi)^{n(n-1)}\Delta(G)$.

Suppose that $f(X)=\alpha_0X^n+\dotsc+\alpha_n\in K[X]$ is separable of degree $n\geq 1$. We write the monic polynomial
\begin{equation*}
\alpha_0^{-1}f(X)=\prod_{1\leq j\leq n} (X-\gamma_j)\in K[X]
\end{equation*}
as a product taken over $k$ of irreducible and monic $f_k(X)\in K[X]$ and we denote by $F$ and $F_k$ the homogenizations in $K[X,Y]$ of $f$ and $f_k$ respectively.  Let $\Sb$ be a finite set of places of  $K$ and let  $\OSS$ be the ring of $\Sb$-integers in $K$ with units $\OSSt$.  For any $A\in K[X,Y]$ we let $(A)_\Sb$ be the $\OSS$-submodule of $K$ generated by the coefficients of $A$ and if $G\neq 0$ we define $$d_{\Sb}(G)=\frac{(\Delta(G))_\Sb}{(G)_\Sb^{2n-2}}.$$ 
We now prove some elementary results for which we did not find a suitable reference.

\begin{lemma}\label{lembinarprops}
The discriminant of $F$ and of $f$ are equal. The binary forms $F_k\in K[X,Y]$ are irreducible and satisfy $\prod_k F_k=\alpha_0^{-1}F$. If $F\in \OSS[X,Y]$ is monic with $\Delta(F)\in \OSSt$, then $d_{\Sb}(F)=\OSS$.
\end{lemma}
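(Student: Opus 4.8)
The plan is to dispose of the three assertions in turn, working throughout with the factorization $\alpha_0^{-1}f(X)=\prod_{1\leq j\leq n}(X-\gamma_j)$ over $\bar{K}$.

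For the equality of discriminants I would homogenize to get $F(X,Y)=\alpha_0\prod_{j=1}^n(X-\gamma_j Y)$, and then exhibit a factorization of the shape required by the definition of $\Delta$: picking any $\zeta_1,\dots,\zeta_n$ (for instance $\zeta_1=\alpha_0$ and $\zeta_j=1$ for $j\geq 2$) with $\prod_j\zeta_j=\alpha_0$ and setting $\xi_j=\zeta_j\gamma_j$ gives $F=\prod_{j=1}^n(\zeta_jX-\xi_jY)$. Since $\zeta_i\xi_j-\zeta_j\xi_i=\zeta_i\zeta_j(\gamma_j-\gamma_i)$ and each index occurs in exactly $n-1$ of the unordered pairs,
$$
\Delta(F)=\prod_{i<j}(\zeta_i\zeta_j)^2(\gamma_i-\gamma_j)^2=\Bigl(\prod_j\zeta_j\Bigr)^{2(n-1)}\prod_{i<j}(\gamma_i-\gamma_j)^2=\alpha_0^{2n-2}\prod_{i<j}(\gamma_i-\gamma_j)^2,
$$
which is precisely $\Delta(f)$; the outcome does not depend on the chosen $\zeta_j$, consistent with the well-definedness of $\Delta$.

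For the factorization and irreducibility I would first use that homogenization is multiplicative: if $f_k$ has degree $d_k$ then $F_k(X,Y)=Y^{d_k}f_k(X/Y)$, so that $\prod_k F_k=Y^{\sum_k d_k}\prod_k f_k(X/Y)=Y^n\,\alpha_0^{-1}f(X/Y)=\alpha_0^{-1}F$, using $\sum_k d_k=n$ and $F(X,Y)=Y^n f(X/Y)$. For irreducibility, the key observation is that each $F_k$ is monic in $X$ (its $X^{d_k}$-coefficient equals $1$ because $f_k$ is monic), so $Y\nmid F_k$. Hence in any homogeneous factorization $F_k=GH$ with $G,H$ of positive degree, the leading $X$-coefficients of $G$ and $H$ are nonzero, since their product is the $X^{d_k}$-coefficient $1$ of $F_k$; dehomogenizing at $Y=1$ then yields $f_k=G(X,1)H(X,1)$ with both factors of positive degree, contradicting the irreducibility of $f_k$. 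Thus $F_k$ is irreducible. This descent of a homogeneous factorization to an honest factorization of $f_k$, and in particular the need to rule out the factor $Y$ via the monic hypothesis, is the one place in the lemma requiring genuine care; I expect it to be the main (though still modest) obstacle.

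Finally, the identity $d_\Sb(F)=\OSS$ is purely formal once the hypotheses are unwound. Since $F\in\OSS[X,Y]$ is monic, all its coefficients lie in $\OSS$ and one of them equals $1$, so the $\OSS$-module $(F)_\Sb$ generated by them is all of $\OSS$, whence $(F)_\Sb^{2n-2}=\OSS$. Since $\Delta(F)\in\OSSt$ is a unit, $(\Delta(F))_\Sb=\Delta(F)\OSS=\OSS$. Therefore $d_\Sb(F)=(\Delta(F))_\Sb/(F)_\Sb^{2n-2}=\OSS$, completing the proof.
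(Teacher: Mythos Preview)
Your proof is correct and follows essentially the same approach as the paper's own proof. The only differences are in emphasis: for the discriminant equality you construct an explicit factorization from the roots $\gamma_j$ and then compute, whereas the paper starts from an arbitrary factorization $F=\prod(\zeta_jX-\xi_jY)$, observes $\prod\zeta_j=\alpha_0\neq 0$ to identify $\xi_j/\zeta_j$ with the roots, and then concludes; for irreducibility you spell out the dehomogenization argument (ruling out the factor $Y$ via monicity) that the paper leaves as a one-line appeal to the irreducibility of $F_k(X,1)=f_k$; and for $d_\Sb(F)=\OSS$ both arguments are identical up to the phrasing of why $(F)_\Sb=\OSS$.
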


\begin{proof}
The leading coefficient $\alpha_0\in K\setminus\{0\}$ of $F$ is the product of the elements $\zeta_j$, thus all $\zeta_j$ are nonzero and then with $F(X,1)=f(X)$ we get
\begin{equation*}
F(X,Y)=\prod_{1\leq j\leq n}\zeta_j(X-\frac{\xi_j}{\zeta_j}Y)=\alpha_0\prod_{1\leq j\leq n}(X-\gamma_jY),
\end{equation*}
which implies $\Delta(F)=\Delta(f)$. On using that $F_k(X,1)$ is irreducible in $K[X]$ we deduce that $F_k$ is irreducible in $K[X,Y]$.
We observe that $\prod_k f_k$ has the same coefficients as $\alpha_0^{-1}F$ and this implies that $\prod_k F_k=\alpha_0^{-1}F$.
The fractional ideal $(F)_\Sb^{-1}$ of $\OSS$ in $K$ consists of the elements $\alpha\in K$ such that $\alpha F\in \OSS[X,Y]$. This gives for our monic $F\in \OSS[X,Y]$ that $(F)_{\Sb}^{-1}=\OSS$ and then our assumption that $\Delta(F)\in\OSSt$ leads to $d_{\Sb}(F)=\OSS$.
\end{proof}

Next we give Proposition \ref{propeff} which allows us later to construct Weierstrass models with effectively bounded height. Its proof is based on the theory of logarithmic forms. More precisely, in part (i) we use ideas of Gy\H{o}ry and we apply a result of Gy\H{o}ry and Yu \cite{gyyu:sunits}.  Part (ii) is an application  of Gy\H{o}ry and Evertse \cite{evgy:binaryforms}. To state the proposition we have to introduce some notation. In the sequel $\kkk$ is an effective absolute constant.

For any subset $T_0$ of $\Sb$ let $\mathcal O_{T_0}$ be the ring of $T_0$-integers and let $\rho: \mathcal O_{T_0}\rightarrow \textnormal{GL}_2(K)$ be the representation given by
$$\tau\mapsto \begin{pmatrix} 1 & \tau \\ 0 & 1 \end{pmatrix}.$$
We say that $\phi\in \rho(\mathcal O_{T_0})$ is a unipotent translation and we define $\phi^*f(X)=(\phi^*F)(X,1)$. For any Dedekind domain $R\subseteq K$ we denote by $\textnormal{SL}_2(R)$  the $2\times 2$-matrices with entries in $R$ and with determinant one.

As before let $N_\Sb=\prod N(v)$ and $n_\Sb=\prod \log N(v)$ with the products taken over the finite places $v\in \Sb$ and for $N(v)$ the number of elements in the residue field of $v$. Let $\ts$ be the number of finite places in $\Sb$ and let $\ps$ be the maximum of the rational prime divisors of $N_\Sb$.

Let $d$ be the degree of $K$ over $\QQ$, let $D_K$ be  the absolute value of the discriminant of $K$ over $\QQ$ and for any $n\geq 3$ let $$\mm(n,d)=3n(n-1)(n-2)d.$$ Let $h$ be the absolute logarithmic height (see \cite[1.6.1]{bogu:diophantinegeometry}) and let $H=\exp(h)$ be the absolute multiplicative height. We now can state the following proposition.

\begin{proposition}\label{propeff}
Suppose $f\in \mathcal O_{T_0}[X]$ has degree $n\geq 3$ and discriminant $\Delta(f) \in\OSSt$. Let $F$ be the homogenization of $f$ in $\mathcal O_{T_0}[X,Y]$ and write $\mu=\mu(n,d)$.

\begin{itemize}
\item[(i)] If $f$ is monic, then there is a unipotent translation $\phi\in \textnormal{SL}_2(\mathcal O_{T_0})$ such that
$h(\phi^*f)\leq h(\Delta(f))+(N_\Sb D_K^{1/3})^{\mm} (\mm(t+1))^{4\mm(t+1)}.$
\item[(ii)] In general there is a $\phi\in\slos$ such that \\
$h(\phi^*F)\leq 32h(\Delta(F))+\ps^{2n^8d(\ts^2+1)^2}D_K^{2n^8(\ts+1)}(n(\ts+d))^{\kkk n^8d(\ts^2+1)^2}.$
\end{itemize}
\end{proposition}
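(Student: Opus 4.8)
The plan for part (i) is to convert the hypothesis $\Delta(f)\in\OSSt$ into unit equations and then apply Győry–Yu \cite{gyyu:sunits}. Passing to the splitting field $L$ of $f$ over $K$, I would write $f(X)=\prod_{1\le j\le n}(X-\gamma_j)$; since $f$ is monic over $\mathcal{O}_{T_0}\subseteq\OSS$ the $\gamma_j$ are $T_L$-integral (with $T_L$ the places of $L$ above $T$), and since $\Delta(f)=\prod_{i<j}(\gamma_i-\gamma_j)^2\in\OSSt$ each difference $\gamma_i-\gamma_j$ is a $T_L$-unit. For every ordered triple $(i,j,k)$ of distinct indices, dividing $(\gamma_i-\gamma_j)+(\gamma_j-\gamma_k)=\gamma_i-\gamma_k$ by $\gamma_i-\gamma_k$ produces a unit equation $x+y=1$ already solvable in $K(\gamma_i,\gamma_j,\gamma_k)$, a field of degree at most $n(n-1)(n-2)d$ over $\QQ$; this degree is $\mu/3$ and explains the exponent $\mu$ in the stated bound.

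I would then bound $h\bigl((\gamma_i-\gamma_j)/(\gamma_i-\gamma_k)\bigr)$ for all triples by the effective theorem of Győry and Yu, controlling the invariants of $K(\gamma_i,\gamma_j,\gamma_k)$ (degree, discriminant, places above $T$, residue characteristics) in terms of $d,D_K,N_T,t,n$; bounding the discriminant of this field by a power of $D_K$ is what yields the factor $D_K^{1/3}$. Chaining two such ratios through a common index bounds every ratio $(\gamma_a-\gamma_b)/(\gamma_1-\gamma_2)$, and the identity $\Delta(f)=(\gamma_1-\gamma_2)^{n(n-1)}\prod_{a<b}\bigl((\gamma_a-\gamma_b)/(\gamma_1-\gamma_2)\bigr)^{2}$ then lets me solve for $h(\gamma_1-\gamma_2)$, and hence for all $h(\gamma_i-\gamma_j)$, in terms of $h(\Delta(f))$ plus the Győry–Yu bound; this is the source of the additive term $h(\Delta(f))$. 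Finally, writing $\alpha_1=-\sum_j\gamma_j\in\mathcal{O}_{T_0}$ one has $n(\gamma_j-\tau)=\sum_i(\gamma_j-\gamma_i)-(\alpha_1+n\tau)$, so it suffices to choose $\tau\in\mathcal{O}_{T_0}$ making $\alpha_1+n\tau$ small, i.e.\ a small representative of $\alpha_1$ in $\mathcal{O}_{T_0}/n\mathcal{O}_{T_0}$; then $\phi=\rho(\tau)\in\textnormal{SL}_2(\mathcal{O}_{T_0})$, and bounding the elementary symmetric functions of the $\gamma_j-\tau$ gives $h(\phi^*f)$. I expect the main difficulty to be bookkeeping: keeping the auxiliary fields at degree $\mu/3$ by adjoining only three roots at a time, and tracking the Győry–Yu constants, the division by $n$ in the centering step costing only a harmless $O(\log n)$.

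For part (ii), $f$ need not be monic, so I would pass to the binary form $F$ and use the effective reduction theory of Evertse and Győry \cite{evgy:binaryforms}. First, by Lemma \ref{lembinarprops} and the hypothesis $\Delta(F)=\Delta(f)\in\OSSt$, I would check that $d_\Sb(F)=\OSS$: as $\Delta(F)\in\ZZ[\beta_0,\dots,\beta_n]$ is homogeneous of degree $2n-2$ one has $(\Delta(F))_\Sb\subseteq(F)_\Sb^{2n-2}\subseteq\OSS$, and the unit hypothesis forces $(F)_\Sb^{2n-2}=\OSS$, hence $(F)_\Sb=\OSS$ in the Dedekind domain $\OSS$. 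This puts $F$ in the normalized case of the Evertse–Győry theorem, which I would then apply (in a slightly extended form) to obtain a representative in the $\textnormal{SL}_2(\OSS)$-orbit of $F$ of height at most $32\,h(\Delta(F))$ plus an explicit quantity depending only on $n,t,p,d$ and $D_K$; since $\det\phi=1$ preserves the discriminant, the resulting $\phi\in\slos$ satisfies the asserted inequality. The main obstacle here is the adaptation of \cite[Theorem 3]{evgy:binaryforms}: one must extend it to deliver a genuine element of $\textnormal{SL}_2(\OSS)$ rather than an equivalence up to $\textnormal{GL}_2$, while preserving the explicit constants, the absolute $32$ and the exponents $n^8,(t^2+1)^2$ being inherited directly from that theorem.
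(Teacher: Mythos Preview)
Your proposal is correct and matches the paper's approach in both parts: unit equations in $K(\gamma_i,\gamma_j,\gamma_k)$ solved by Gy\H{o}ry--Yu and a trace-centering translation (the paper uses \cite[Lemma 6]{evgy:binaryforms} for your ``small representative'' step) in (i), and a direct appeal to \cite[Theorem 3]{evgy:binaryforms} in (ii). For the two points you flag as obstacles, the paper bounds $D_{K(\gamma_i,\gamma_j,\gamma_k)}$ by showing the relative discriminant over $K$ is supported on $T$ (via \cite[Lemma 15]{evgy:binaryforms} and multiplicativity of differents in towers), and it upgrades the Evertse--Gy\H{o}ry output $(\psi_0,\epsilon)\in\textnormal{SL}_2(\OSS)\times\OSSt$ to a genuine $\phi\in\textnormal{SL}_2(\OSS)$ by writing $\epsilon=\epsilon_1\epsilon_2^{-n}$ with $h(\epsilon_1)$ bounded (\cite[Lemma 3]{gyyu:sunits}), setting $\psi=\epsilon_2^{-1}\psi_0$, and then applying \cite[Lemma 7]{evgy:binaryforms} to produce $\phi\in\textnormal{SL}_2(\OSS)$ with $\psi^{-1}\phi$ of bounded height.
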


\begin{proof}
In the proof we shall use well-known results in \cite[Chapter 1]{bogu:diophantinegeometry} for the height $H$ without mentioning it.

(i) We start with some notation. Since $n\geq 3$ and $\Delta(f)\neq 0$ we can choose pairwise different roots $\alpha$, $\beta$, $\gamma$ of $f$ and we write $L=K(\alpha,\beta,\gamma)$. The quantities $D_L$, $l$, $\ls$, $R_{\ls}$ and $u$ denote the absolute value of the discriminant of $L$ over $\QQ$, the degree $[L:\QQ]$, the places of $L$ which lie above $\Sb$ together with the infinite places of $L$,  the $\ls$-regulator of $L$ and the number of finite places in $\ls$ respectively. We note that $l\leq dm$ for $m=n(n-1)(n-2)$.

In a first step we show that $H(\alpha-\beta)$ is bounded explicitly in terms of $n$, $K$, $\Sb$ and $H(\Delta(f))$. The roots $\alpha,\beta,\gamma$ of our monic $f\in \mathcal O_{\ls}[X]$ are $\ls$-integral and $\Delta(f)$ is a $\ls$-unit. This shows that the factors 
$(\alpha-\beta)$, $(\beta-\gamma)$ and $(\alpha-\gamma)$ of $\Delta(f)$ are $U$-units. Therefore we get a $\ls$-unit equation
\begin{equation*}
\frac{(\alpha-\beta)}{(\alpha-\gamma)}+\frac{(\beta-\gamma)}{(\alpha-\gamma)}=1.
\end{equation*}
An application of \cite[Theorem 1]{gyyu:sunits} gives
$$\Omega_{\ls}=\exp(7\kappa_\Sb R_{\ls} N_\Sb^m\max(1,\log R_{\ls})),$$
for $\kappa_\Sb=c_1(md,m(\ts+d))$ defined in \cite[Theorem 1]{gyyu:sunits}, such that
\begin{equation}\label{eq:omega}
H(\frac{\gamma-\alpha}{\alpha-\beta})\leq \Omega_{\ls}.
\end{equation}
In what follows we shall use that $\kappa_T$ and thus $\Omega_U$ are sufficiently big, see \cite[Theorem 1]{gyyu:sunits}.
The term $\Omega_{\ls}$ depends on  $R_{\ls}$ for which we now derive an upper bound in terms of $K$, $n$ and $\Sb$. For $n_U$ defined similarly as $n_\Sb$ with $\ls$ in place of $\Sb$ we deduce $n_U\leq ((l/d)^{\ts}n_\Sb)^{l/d}$ and then (\ref{eq:regulator}), with $L$ and $U$ in place of $K$ and $S'$ respectively, leads to
\begin{equation*}
R_{\ls} \leq (2l)^{l-1}D_L^{1/2}\max(1,\log D_L)^{l-1}((l/d)^{\ts}n_\Sb)^{l/d}.
\end{equation*}
To estimate $D_L$ we first show that $\mathfrak D_{L/K}\OSS=\OSS$, where $\mathfrak D_{L/K}$ is the relative discriminant ideal of $L$ over $K$. For any fractional ideal $\mathfrak a$ of $\OK$ in $K$ let $(\mathfrak a)_\Sb=\mathfrak a\OSS$, where $\OK$ denotes the ring of integers in $K$. We consider for $\kappa\in\{\alpha,\beta,\gamma\}$ the field $M=K(\kappa)$, we write $f(X)$ as a product taken over $k$ of irreducible monic $f_k(X)\in K[X]$ and we let $F$ and $F_k$ be the homogenizations in $K[X,Y]$ of $f$ and $f_k$ respectively. Lemma \ref{lembinarprops} implies that $F=\prod_k F_k$ can be associated to a system of fields which contains the field $M$ and then \cite[Lemma 15]{evgy:binaryforms} shows $$d_{\Sb}(F)\subseteq  (\mathfrak D_{M/K})_\Sb,$$ for $\mathfrak D_{M/K}$ the relative discriminant ideal of $M$ over $K$. Thus our assumptions that $f$ is monic with $\Delta(f)\in\OSSt$ together with Lemma \ref{lembinarprops} show that $(\mathfrak D_{M/K})_\Sb$ is trivial. Let $\mathfrak d_{L/K}$ and $\mathfrak d_{M/K}$ be the relative different of $L$ and $M$ over $K$ respectively. The multiplicativity of differents in towers together with \cite[Lemma 6]{stark:effectivesiegel} leads to
$\prod_\kappa \mathfrak d_{M/K}\subseteq\mathfrak d_{L/K}$
and taking the ideal norm from $L$ into $K$ gives
$$\prod_\kappa \mathfrak D_{M/K}^{[L:M]}\subseteq \mathfrak D_{L/K}.$$
Thus we deduce that $(\mathfrak D_{L/K})_\Sb=\OSS$, since all $(\mathfrak D_{M/K})_\Sb$ are trivial. Then the arguments of \cite[p.194]{evgy:binaryforms} show
$
D_L\leq (D_K N_\Sb)^{l/d}(l/d)^{l\ts}
$
and this together with the above upper bound for $R_{\ls}$ gives $R_{\ls}\leq c_Kc_\Sb,$ for
$$c_K=D_K^{m/2}(2m^3d^2\max(1,\log D_K))^{md-1},$$
$$c_\Sb=(N_\Sb^{1/2} n_\Sb)^{m}(\max(1,\ts)m^{2\ts}\max(1, \log N_\Sb))^{md-1}.$$
Then we replace in the definition of $\Omega_{\ls}$ the term $R_{\ls}$ by $c_Kc_\Sb$ and denote by $\Omega$ the resulting term. Hence we get $\Omega_{\ls}\leq \Omega$ and since the root $\gamma\neq \alpha,\beta$ of $f$ was chosen arbitrarily it follows  from (\ref{eq:omega}) that  $$H(\Delta(f)(\alpha-\beta)^{-n(n-1)})\leq (2\Omega^2)^{n(n-1)}.$$
This leads to $H(\alpha-\beta)^{n(n-1)}\leq H(\Delta(f))(2\Omega^2)^{n(n-1)}$ which gives
\begin{equation}\label{eq:delta}
H(\alpha-\beta)\leq 2\Omega^2H(\Delta(f))^{1/(n(n-1))}.
\end{equation}

In a second step we approximate the trace $\textnormal{Tr}(f)$ of $f$ by a suitable  $\tau\in \mathcal O_{T_0}$ such that if $\alpha$ is a root of $f$, then $H(\alpha-\tau)$ is bounded explicitly in terms of $\Omega$, $n$ and $H(\Delta(f))$.  Our assumption $f\in \mathcal O_{T_0}[X]$ provides that $\textnormal{Tr}(f)\in \mathcal O_{T_0}$ and then an application of \cite[Lemma 6]{evgy:binaryforms} gives $\eta\in \OK$, $\tau\in\mathcal O_{T_0}$ such that $\eta=\textnormal{Tr}(f)-n\tau$ and that $H(\eta)\leq \Omega$.
Thus for any root $\alpha$ of $f$ we get $$n(\alpha-\tau)=\sum(\alpha-\beta)+\eta,$$ with the sum taken over the roots $\beta$ of $f$, which together with (\ref{eq:delta}) leads to
\begin{equation*}
H(\alpha-\tau)\leq \Omega^{2n}H(\Delta(f))^{1/n}.
\end{equation*}

For $\phi=\rho(\tau)$ in $\textnormal{SL}_2(\mathcal O_{T_0})$ we get $\phi^*f(X)=\prod(X-(\alpha-\tau))$ with the product taken over the roots $\alpha$ of $f$. Then we deduce from the above estimate for $h(\alpha-\tau)$, which holds for all roots $\alpha$ of $f$,  together with
$$\Omega=\exp\bigl(7 \kappa_\Sb N_\Sb^m c_Kc_\Sb \log (c_Kc_\Sb)\bigl)$$ and the definitions of $c_K,\kappa_\Sb,c_\Sb$ an upper  bound for $h(\phi^*f)$ as stated in (i). To simplify the form of the final bound we used the estimates  $m\geq 6$, $\log x\leq 3x^{1/3}$ for $x\geq 1$, $n_T\leq N_T$ and $\log (c_Kc_\Sb)\leq 3(c_Kc_\Sb)^{1/12}.$

(ii) From \cite[Theorem 3]{evgy:binaryforms} we get $\psi_0\in \slos$, $\epsilon\in \OSSt$ and effective absolute constants $c_6\geq 3$, $c_7\geq 1$ such that $H(\epsilon(\psi_0^*F))\leq \Omega$, where
\begin{equation}\label{eq:omegab}
\log\Omega=(7n)^{-2}(c_6(d+\ts)n)^{c_7dn^8(\ts+1)^2}\ps^{2dn^8(\ts+1)^2}D_K^{2n^8(\ts+1)}.
\end{equation}
We now construct with $\epsilon$ and $\psi_0$ an element $\phi\in\slos$ such that $\phi^*F$ has bounded height. From \cite[Lemma 3]{gyyu:sunits} we deduce that there exist $\Sb$-units $\epsilon_1$ and $\epsilon_2$  such that $\epsilon=\epsilon_1\epsilon_2^{-n}$ and  such that $H(\epsilon_1)$ is bounded from above by a term, which is at most $\Omega$ by \cite[Theorem 6.5]{lenstra:algorithms}.
If $\psi=\epsilon_2^{-1} \psi_0$ and $G=\psi^* F$, then we see that $G$ takes the form $\epsilon_1^{-1}\epsilon (\psi_0^* F)$ which implies
\begin{equation}\label{eq:hg}
H(G)\leq \Omega^2.
\end{equation}
For $g(X)=G(X,1)$ we get from Lemma \ref{lembinarprops} that $H(\Delta(G))=H(\Delta(g))$ and this leads to
$
H(\Delta(G))\leq 2^{3n(n-1)}H(G)^{2n-2}.
$
Then (\ref{eq:hg}) implies
$$
H(\det(\psi^{-1}))\leq \Omega^{2}H(\Delta(F))^{1/(n(n-1))},
$$
since $\det{(\psi^{-1})^{n(n-1)}}=\Delta(F)\Delta(G)^{-1}$ by $F=(\psi^{-1})^*G$.
An application of \cite[Lemma 7]{evgy:binaryforms} with the transpose of $\psi^{-1}\in \glos$ gives $\phi\in \slos$ such that the maximum $H(\psi^{-1}\phi)$ of the absolute multiplicative heights of the standard coordinates of $\psi^{-1}\phi$ is at most $\Omega H(\det(\psi^{-1}))^8$.
Thus the upper bound for $H(\det(\psi^{-1}))$ implies
\begin{equation}\label{eq:psi}
H(\psi^{-1}\phi)\leq \Omega^{17}H(\Delta(F))^{8/(n(n-1))}.
\end{equation}

In the last step  we derive an upper bound for $H(\phi^*F)$ in terms of $n$, $\Omega$ and $H(\Delta(F))$. Let $(\alpha_i)$ be the coefficients of $G$ and let
$$\psi^{-1}\phi=\begin{pmatrix} \alpha & \beta \\ \gamma & \delta \end{pmatrix}.$$
Since the pullback $^*$ induces a right-action of the group $\textnormal{GL}_2(K)$ on the set of binary forms over $K$ we get that $\phi^*F=(\psi^{-1}\phi)^*G$ which shows $$\phi^*F(X,Y)=\sum_{i=0}^{n}\alpha_i(\alpha X+\beta Y)^{n-i}(\gamma X+\delta Y)^i.$$ We deduce that
$$H(\phi^*F)\leq \ (n+1)\prod_{i=0}^{n}H(\alpha_i(\alpha X+\beta Y)^{n-i}(\gamma X+\delta Y)^i)$$
and we see that the $i$-th factor of the product on the right-hand side is at most $2^{3n} H(\alpha_i)H(\psi^{-1}\phi)^{2n}$.
Hence (\ref{eq:hg}) and (\ref{eq:psi}) give
$$H(\phi^*F)\leq \Omega^{(7n)^2}H(\Delta)^{32}$$
and then (ii) follows  from (\ref{eq:omegab}) with $\kkk=c_6c_7$. This completes the proof of the proposition.
\end{proof}

For any monic and separable $f\in K[X]$ of degree $n\geq 3$ we let $T_0$  be the smallest set of places of $K$ such that $f\in \mathcal O_{T_0}[X]$. For $T$ we take the smallest set of places of $K$ containing $T_0$ such that $\Delta(f)\in \OSSt$. Then we see that (i) gives  a unipotent translation $\phi\in \textnormal{SL}_2(\mathcal O_{T_0})$ such that
$$h(\phi^*f)\leq h(\Delta(f))+(N_T D_K^{1/3})^{\mm } (\mm(t+1))^{4\mm(t+1)},$$
for $\mu=\mu(n,d)$. The quantities $t$ and $N_T$ can be bounded effectively in terms of $\Delta(f)$ and $T_0$ such that the resulting bound improves in all parameters the actual best effective estimates (see \cite[Thm 7]{gyory:monicpolynomials}, \cite{beevgy:diophantineproblems}, \cite{gyory:update} and the references therein) and makes them completely explicit. For example we can reduce the exponent $(dn!)^2((n!d)!)$ of $N_T$ and $D_K$, which would follow from \cite[Thm 7]{gyory:monicpolynomials}, to $\mm\leq 6n^3d$. Moreover, it is shown in \cite{gyory:monicpolynomials} that an effective estimate for $h(\phi^*f)$ has several applications in algebraic number theory. These can now be stated with sharper and fully explicit bounds. Finally, we mention that Evertse and Gy\H{o}ry informed the author that they are writing a book which will  inter alia allow to improve Proposition \ref{propeff} and which will give explicitly the effective absolute constants $c_6$, $c_7$ and $c=c_6c_7$ in the above proof.

\section{Proofs}\label{secproof}

For an outline of the principal ideas of the following proof we refer to the introduction. Let $K$ be a number field of degree $d$ over $\QQ$ and denote by $\OK$ its ring of integers. Let  $D_K$ be the absolute value of the discriminant of $K$ over $\QQ$, let $S$ be a finite set of places of $K$ and let $g\geq 1$ be an integer. As before we let $h$ be the absolute logarithmic height (see \cite[1.6.1]{bogu:diophantinegeometry}).

\begin{proof}[Proof of the Theorem] We take a hyperelliptic curve $C$ of genus $g$ defined over $K$, with good reduction outside $S$, as in the theorem. First, we recall some notation. For any finite place $v$ of $K$, we denote by $N(v)$ the cardinality of the residue field of $v$.  Let $s$ be the number of finite places in $S$,  write $N_S=\prod N(v)$ with the product taken over the finite places $v\in S$ and let  $p$ be the maximum of the rational prime divisors of $N_S$; and for any finite set of places $T$  of $K$ we denote by $\ts$, $N_{\Sb}$ and $\ps$ the corresponding quantities. If $\ell$ is a rational prime, then there exist at most $d$ finite places of $K$ of residue characteristic $\ell$ and if $T$ contains all finite places of $K$ of residue characteristic $\ell$, then $\ell$ is invertible in the ring of $T$-integers $\mathcal O_T$. Thus, on using Lemma \ref{lemsb}, we see that there exists a finite set of places $\Sb$ of $K$ with the following properties. The set $\Sb$ contains the set $S$, $\OSS$ is a principal ideal domain, $2$ and all residue characteristics of the finite places in $T$ are in the group of units $\OSSt$ of $\OSS$, 
\begin{equation}\label{eq:u}
\ts\leq d(\ta+\lambda_S+1)=d\sigma, \ N_\Sb\leq (2N_S D_K^{\lambda_S/2})^{d} \ \textnormal{ and } \ps\leq \max(2,p,D_K^{1/2}).
\end{equation}
Here $\lambda_S=\log_2 h_S$ for $h_S$ the class number of $\OS$.

To prove statement (i) we can assume that $C$ has a $K$-rational Weierstrass point.  Proposition \ref{propgeom} (i) gives a  Weierstrass model $\mathcal W(l)$ of $C$ over $\sp(\OSS)$ with discriminant $\Delta'\in\OSSt$ and with the following properties. The polynomial $l$ is monic of degree $2g+1$ and its discriminant $\Delta(l)=2^{-4g}\Delta'\in \OSSt$ satisfies
$$h(\Delta(l))\leq 2(50g(\ts+d)!)^2(dD_K)^{d}N_\Sb.$$
Then we get from Proposition \ref{propeff} (i) a unipotent translation $\phi\in \slos$ such that
\begin{equation}\label{eq:fheight}
h(\phi^*l)\leq 2(\mm(\ts+1))^{4\mm (\ts+1)}(N_\Sb D_K^{1/3})^{\mm},
\end{equation}
for $\mm=3(2g+1)(2g)(2g-1)d=\nn/(2d)$.

In the next step we modify $\mathcal W(\phi^*l)$ to get a Weierstrass model of $C$ over $\sp(\OK)$ with the desired properties. To simplify notation we write
\begin{equation}\label{eq:denP}
n=2g+1,\ \ \eta=1.
\end{equation}
Let $\alpha$ be a coefficient of $\phi^*l$. We denote by $N(w)$ the cardinality of the residue field of a finite place $w$ of $\QQ(\alpha)$ and  we define $\lvert \alpha\rvert_w=N(w)^{-w(\alpha)}$, where $w(\alpha)$ is the order in $\alpha$ of the prime ideal which corresponds to $w$.  On taking the product over the finite places $w$ of $\QQ(\alpha)$ we see that
\begin{equation}\label{eq:hfin}
\delta(\alpha)=\prod \max\bigl( 1, \left|\alpha\right|_w\bigl)\in \NN
\end{equation}
is at most $H(\alpha)^d$ and that $\delta(\alpha)\alpha\in \OK$ by \cite[Lemma 4.2]{fuvkwu:elliptic}. The residue characteristic of a finite place in $\Sb$ is invertible in $\OSS$ and only the finite places $w$ of $\QQ(\alpha)$ with $\ord_{w}(\alpha)\leq -1$ contribute to the right-hand side of (\ref{eq:hfin}). Since $\phi^*l$ has coefficients in $\OSS$ this shows that
$$\omega=\prod \delta(\alpha)\in \OSSt,$$
where the product is taken over the coefficients $\alpha$ of $\phi^*l$. For $$f(X)=\omega^{2n}\phi^*l(X/\omega^2)\in\OK[X]$$ let $\mathcal W(f)$ be the corresponding Weierstrass model of $C$ over $\sp(\OK)$. By (\ref{eq:discritransform}) it has discriminant $\omega^{4(g+1-\eta)(2g+1)}\Delta'\in \OSSt$ and we see that
\begin{equation}\label{eq:wf}
h(\mathcal W(f))\leq 4dn^2h(\phi^*l).
\end{equation}
On replacing $N_{\Sb}$ and $\ts$ in (\ref{eq:fheight}) by the estimates given in (\ref{eq:u}) we conclude from (\ref{eq:wf}) that $\mathcal W(f)$ has the required properties. This completes the proof of Theorem (i).

To prove statement (ii) we can assume that $C$ has no $K$-rational Weierstrass point. Proposition \ref{propgeom} (ii) gives a Weierstrass model $\mathcal W(l)$ of $C$ over $\sp(\OSS)$ with discriminant $\Delta'\in \OSSt$ such that $l$ has degree $2g+2$ and discriminant $\Delta(l)=2^{-4g}\Delta'\in\OSSt$ which satisfies
$$h(\Delta(l))\leq 2(50g(\ts+d)!)^2(dD_K)^{d}n_\Sb.$$
Then an application of Proposition \ref{propeff} (ii) to the two variable homogenization $L$ of $l$ gives $$\phi=\begin{pmatrix}\alpha & \beta \\ \gamma & \delta \end{pmatrix}\in \slos$$
and an effective absolute constant $\kkk$ such that
\begin{equation}\label{eq:endboundF}
h(\phi^*L)\leq 2\ps^{2n^8d(\ts^2+1)^2}D_K^{2n^8(\ts+1)}(n(\ts+d))^{\kkk n^8d(\ts^2+1)^2},
\end{equation}
for $n=2g+2$.

We now use  the close relation between hyperelliptic curves and binary forms. Suppose that $\mathcal W(l)$ arises from $V^2=l(Z)$. The group $\slos$ acts on the non-constant rational functions in $K(C)$ by fractional linear transformations. We get a non-constant rational function $X=\phi^{-1} Z\in K(C)$. Therefore  $Y=V(\gamma X+\delta)^{n/2}$ is non-constant and it is in $K(C)$, since $n/2$ is an integer. Then we can rewrite $V^{2}=l(Z)$ as
\begin{equation*}
\frac{Y^2}{(\gamma X+\delta)^{n}}=l(\frac{\alpha X+\beta}{\gamma X+\delta}).
\end{equation*}
Multiplying both sides of this equation by $(\gamma X+\delta)^{n}$ gives
\begin{equation*}
Y^2=\sum_{i=0}^{n}\alpha_i(\alpha X+\beta)^{n-i}(\gamma X+\delta)^{i}=\phi^*l(X)
\end{equation*}
for $(\alpha_i)$ the coefficients of $l$ and this implies that $\mathcal W(\phi^*l)$ is a Weierstrass model of $C$ over $\sp(\OSS)$. Lemma \ref{lembinarprops} shows that $\Delta(\phi^*L)=\Delta(\phi^*l)$ and $\Delta(L)=\Delta(l)$ which gives $\Delta(\phi^*l)=\Delta(l)$, since $\phi$ is in $\slos$. Therefore $\mathcal W(\phi^*l)$ has discriminant $\Delta'\in\OSSt$. Then $h(\mathcal W(\phi^*l))=h(\phi^*L)$ and (\ref{eq:endboundF})  together with the arguments of the proof of part (i) (where now $n=2g+2$, $\eta=0$ in (\ref{eq:denP})) lead to Theorem (ii). To simplify the form of the final bound we used here the estimate $g(6dg)^2\leq \nu$. This completes the proof of the theorem.\end{proof}

It remains to prove the corollary. We denote by $N$ the number of $K$-isomorphism classes of hyperelliptic curves of genus $g$ defined over $K$ with good reduction outside $S$.
\begin{proof}[Proof of the Corollary] The theorem shows that there is an explicit constant $\Omega=\Omega(K,S,g,\kkk)$, for $\kkk$ an effective absolute constant, with the following property. Any hyperelliptic curve $C$ over $K$ of genus $g$ with good reduction outside $S$ gives a polynomial $f\in \OK[X]$ of degree at most $2g+2$ with absolute multiplicative height $H$ at most $\Omega$. If two such curves give the same $f$, then their function fields are described by the hyperelliptic equation $Y^2=f(X)$ and we see that these curves are $K$-isomorphic. This implies that $N$ is bounded from above by the number of polynomials $f\in K[X]$ of degree at most $2g+2$ that satisfy $H(f)\leq \Omega$. Thus the proof of \cite[1.6.8]{bogu:diophantinegeometry} yields
\begin{equation*}
N\leq (5\Omega)^{10d^2g}
\end{equation*}
and then (\ref{eq:fheight}) and (\ref{eq:endboundF}) lead to an upper bound for $\Omega$ which shows that the estimate of the corollary holds as stated.

The polynomials in $K[X]$ with bounded degree and bounded absolute height can be determined effectively (for details we refer to the discussions in  \cite{bogu:diophantinegeometry}). Thus the effective upper bound  given in the theorem implies that the $K$-isomorphism classes of hyperelliptic curves over $K$ of genus $g$ with good reduction  outside $S$ can be determined effectively. This completes the proof of the corollary.
\end{proof}

To conclude this article we demonstrate how our method can be used to deal with more general curves.  Let $C$ be a smooth, projective and geometrically connected curve over $K$ of genus $g$, with good reduction outside $S$. We denote by $h_F(J)$  the absolute stable Faltings height of the Jacobian $J=\textnormal{Pic}^0(C)$ of $C$. Let $L$ be a finite field extension of $K$. We denote by $D_L$ the absolute value of the discriminant of $L$ over $\QQ$ and by $d_L=[L:\QQ]$  the degree of $L$ over $\QQ$.  Let $T$ be a finite set of places of $L$. We denote by $\mathcal O_T^\times$ the group of units of the ring of $T$-integers $\mathcal O_T$ of $L$ and we define $N_T=\prod N(v)$ with the product taken over all finite places $v\in T$, where $N(v)$ is the number of elements of the residue field of $v$. 

\begin{proposition} \label{prop:extension}
There exists an effective constant $\lambda$, depending only on $D_L$, $d_L$, $N_T$ and $g$, with the following property. Assume the function field  of $C\times_K L$ takes the form $L(X)[Y]$, where $$Y^m=f(X), \ \ \ m\in\ZZ_{\geq 2},$$  for  $f\in  \mathcal O_T[X]$ a monic separable polynomial of degree at least $3$ with $\Delta(f)\in\mathcal O_T^\times$. Then it holds  $$h_F(J)\leq \lambda.$$
\end{proposition}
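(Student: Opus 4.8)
The plan is to combine the effective bound for monic polynomials of Proposition \ref{propeff} (i) with the Arakelov-theoretic comparison of de Jong and R\'emond, exactly as in the derivation of (\ref{eq:faltest}), but now for the degree $m$ cover $Y^m=f(X)$ in place of the hyperelliptic case $m=2$. I would first record two reductions. The formation of $\textnormal{Pic}^0$ commutes with the base change $C\mapsto C\times_K L$, and the absolute stable Faltings height is invariant under extension of the base field, so $h_F(J)=h_F(J')$ for $J'=\textnormal{Pic}^0(C\times_K L)$; it therefore suffices to bound $h_F(J')$ over $L$. Moreover the hypothesis gives a superelliptic model $Y^m=f(X)$ of $C\times_K L$ for which the projection $(X,Y)\mapsto X$ exhibits $C\times_K L\to\PP^1_L$ as a morphism of degree $m$ that is geometrically cyclic. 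By the Riemann--Hurwitz formula the genus of such a curve grows with $\max(m,\deg f)$, so the assumption that $C$ has genus $g$ bounds both $m$ and the degree $n$ of $f$ effectively in terms of $g$; in particular any dependence on $m$ or $n$ below is a dependence on $g$ alone.

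Next I would produce a model of bounded height. Here $f\in\mathcal O_T[X]$ is monic and separable of degree $n\geq 3$ with $\Delta(f)\in\mathcal O_T^\times$, but a $T$-unit can have large height, so I first control $h(\Delta(f))$. Exactly as in the discriminant reduction of Proposition \ref{propgeom}, a scaling $X\mapsto v^mX$, $Y\mapsto v^nY$ by a suitable unit $v\in\mathcal O_T^\times$ preserves the shape $Y^m=\tilde f(X)$ with $\tilde f$ monic of degree $n$ and changes $\Delta(f)$ only by an $mn(n-1)$-th power of a unit; choosing $v$ so that the resulting discriminant is $\mathcal U$-reduced bounds $h(\Delta(\tilde f))$ effectively in terms of $D_L,d_L,N_T,m,n$, by the same computation as in the last assertion of Proposition \ref{propgeom}, now with modulus $mn(n-1)$. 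Replacing $f$ by $\tilde f$, an application of Proposition \ref{propeff} (i) with $L$ as base field (so that $d_L$, $D_L$, $N_T$ and $t\leq\log_2 N_T$ play the roles of $d$, $D_K$, $N_T$ and $t$) yields a unipotent translation $\phi\in\slos$ with $h(\phi^*f)$ bounded effectively in terms of $D_L,d_L,N_T,n$. The corresponding model $Y^m=\phi^*f(X)$ is $L$-isomorphic to $C\times_K L$ and has height at most this bound.

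Finally I would pass from the height of this affine plane model to $h_F(J')$. Since $(X,Y)\mapsto X$ is geometrically a cyclic cover of degree $m$, the remark of de Jong and R\'emond \cite[Remarque]{jore:shafarevich}, resting on the comparison estimates of Bost, David and Pazuki \cite{pazuki:heights} and of R\'emond \cite{remond:rational}, bounds $h_F(J')$ by an effective function of $m$, $g$ and $h(\phi^*f)$ --- this is precisely the step that produced the factor $2^{2^{22}9^g}$ in (\ref{eq:faltest}) for $m=2$. Substituting the bound for $h(\phi^*f)$ then gives $h_F(J)=h_F(J')\leq\lambda$ with $\lambda$ effective and depending only on $D_L,d_L,N_T$ and $g$.

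I expect the genuine obstacle to be this last step. Unlike the purely number-theoretic inputs above, it requires the Arakelov-geometric machinery relating a plane model to the Faltings height, and one must check that the cited comparison is available for arbitrary, not necessarily prime, degree $m$: the main theorem of \cite{jore:shafarevich} is stated for prime degree, so the effectivity for general $m$ has to be extracted from the underlying estimates of \cite{pazuki:heights} and \cite{remond:rational} rather than quoted directly. The remaining steps --- base-change invariance of $h_F$, the genus bound on $m$ and $n$, the discriminant reduction, and Proposition \ref{propeff} (i) --- are routine given the results already established.
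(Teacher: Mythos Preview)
Your proposal is correct and follows essentially the same route as the paper: Riemann--Hurwitz to bound $m$ and $n$, a unit scaling to reduce $h(\Delta(f))$, Proposition~\ref{propeff}~(i) to bound the height of a translated model, and then the Arakelov comparison to reach $h_F(J)$. The only cosmetic difference is that the paper scales by powers of $\omega^{n'}$ and $\omega^{m'}$ with $l=\textnormal{lcm}(m,n)=m'm=n'n$, reducing the discriminant modulo $l(n-1)$ rather than your $mn(n-1)$; both work. Your closing worry is exactly on target and is resolved just as you predict: the paper bypasses \cite[Remarque]{jore:shafarevich} and instead invokes R\'emond \cite[Th\'eor\`eme~1.3 and~1.5]{remond:rational} directly to bound the theta height $h_\theta(J)$ from the affine plane model, and then Bost--David--Pazuki \cite{pazuki:heights} to pass to $h_F(J)$, so no primality assumption on $m$ is needed.
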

\begin{proof}
In the sequel $\lambda_1,\lambda_2,\dotsc$ denote effective constants depending only on $D_L$, $d_L$, $N_T$ and $g$. Let $n$ be the degree of $f$, and let $l$ be the least common multiple of $m$ and $n$. We write $l=m'm$ and $l=n'n$ with positive integers $m'$ and $n'$. The Riemann-Hurwitz formula implies 
\begin{equation}\label{eq:maxmn}
\max(m,n)\leq l\leq \lambda_1.  
\end{equation}

Let $\zeta$ be a generator of the torsion part of $\OSSt$. 
An application of (\ref{eq:effgen}) with $S'=T$ and $K=L$  gives a generating system $\Sigma=\Sigma'$ of the free part of $\OSSt$ such that 
\begin{equation}\label{eq:epsilon}
h(\epsilon)\leq \lambda_2, \ \ \ \epsilon \in \Sigma.
\end{equation}
Here we used that the cardinality $\lvert \Sigma \rvert$ of $\Sigma$, which appears in the upper bound  (\ref{eq:effgen}), satisfies $\lvert \Sigma\rvert=t+d_L-1\leq \lambda_3$, where $t$ denotes the number of finite places in $T$. Our assumption provides $\Delta=\Delta(f)\in \OSSt$. Thus there exist integers $a,a(\epsilon)$ such that $\Delta$ takes the form
$
\Delta=\zeta^a \prod_{\epsilon \in \Sigma} \epsilon^{a(\epsilon)}.
$
On reducing the exponents $a,a(\epsilon)$ modulo $l(n-1)$, we can rewrite the above equation  as $\Delta=\omega^{-l(n-1)}\zeta^r \prod_{\epsilon \in \Sigma} \epsilon^{r(\epsilon)}$ with $\omega\in \OSSt$ and integers $0\leq r,r(\epsilon)<l(n-1)$. We define $U=\omega^{n'}X$ and $V=\omega^{m'}Y$, and we write $f_\omega(U)=\omega^lf(U/\omega^{n'})\in L[U]$. It follows that
\begin{equation}\label{eq:affplanemodel}
V^m=f_\omega(U)
\end{equation}
defines an affine plane model of $C_L=C\times_K L$.
Further, we observe that $f_\omega\in \mathcal O_T[U]$ is monic,  since  $f\in \mathcal O_T[X]$ is monic and $\omega\in\mathcal O_T$, and the discriminant $\Delta_\omega$ of $f_\omega$ satisfies $\Delta_\omega=\omega^{l(n-1)}\Delta=\zeta^r \prod_{\epsilon \in \Sigma} \epsilon^{r(\epsilon)}.$
Therefore the inequalities $0\leq r(\epsilon)<l(n-1)$, (\ref{eq:maxmn}), (\ref{eq:epsilon}) and $\lvert \Sigma\rvert\leq \lambda_3$ imply  
\begin{equation}\label{eq:deltaomega}
h(\Delta_\omega)\leq \lambda_4.
\end{equation}

The monic polynomial  $f_\omega\in \mathcal O_T[U]$ is separable, with discriminant $\Delta_\omega\in \mathcal O_T^\times$. Moreover, $f_\omega$ has degree $n\geq 3$ and therefore we see that $f_\omega$ and $T$ satisfy all conditions of Proposition \ref{propeff} (i). Hence, an application of Proposition \ref{propeff} (i) with $f=f_\omega$ and $T_0=T$ gives a unipotent translation $\phi\in \textnormal{SL}_2(\mathcal O_T)$ such that $f^*=\phi^*f_\omega$ satisfies
\begin{equation}\label{eq:hf}
h(f^*)\leq \lambda_5+h(\Delta_\omega).
\end{equation}
Here we used in addition (\ref{eq:maxmn}) to estimate $n$ which appears in the upper bound of Proposition \ref{propeff} (i).
The definition of $\phi^*f_\omega$ gives $\tau\in \mathcal O_T$ such that $W=U-\tau$ satisfies $f^*(W)=f_\omega(U)$, and then (\ref{eq:affplanemodel}) shows that $V^m=f^*(W)$
defines an affine plane model of $C_L$.

Let $h_\theta(J)$ be the theta height of $J$, defined in \cite[p.760]{remond:rational}. On using R\'emond \cite[Th\'eor\`eme 1.3 and 1.5]{remond:rational}, we obtain an explicit estimate for  $h_\theta(J)$  in terms of $h(f^*)$, $m$, $n$ and $g$. 
Further, a result of Bost-David-Pazuki  implies an explicit upper bound for $h_F(J)$ in terms of $h_\theta(J)$ and $g$, 
see \cite{pazuki:heights}. Then (\ref{eq:maxmn}), (\ref{eq:deltaomega}) and (\ref{eq:hf}) lead to  $h_F(J)\leq \lambda_6$ as desired. 
\end{proof}

Proposition \ref{prop:extension} provides a tool to prove new cases of the ``effective Shafarevich conjecture'':  Suppose there exists a finite field extension $L$ of $K$ and a finite set of places $T$ of $L$ such that the function field of $C\times_K L$ satisfies the condition of Proposition \ref{prop:extension} and such that  $N_T$, $D_L$ and $d_L$ can be effectively controlled in terms of $K$, $S$ and $g$. Then Proposition \ref{prop:extension} gives an effective constant $\lambda'$, depending only on $K$, $S$ and $g$, such that  $h_F(J)\leq \lambda'.$

\newpage

{\bf Acknowledgements:}
It is a pleasure to thank Gisbert W\"ustholz and Clemens Fuchs for their excellent support during my PhD studies and for their helpful comments which improved the exposition of the above text.
Many thanks go to Philipp Habegger, Sergey Gorchinskiy and Professor Alexei Par{\v{s}}in for answering several questions, to Professor  Bjorn Poonen for his suggestion to look at \cite{mesm:genus2,smart:shafarevich}, to Professor Qing Liu for an interesting remark, and to the unknown referee for useful comments which clarified certain discussions and for a comment which allowed to replace in the upper bounds $h_S-1$ by $\log_2 h_S$. In addition, the author would like to thank Gisbert W\"ustholz for financial support during spring semester 2011.
This material is based upon work supported by the National Science Foundation under agreement No. DMS-0635607.

{\scriptsize
\bibliographystyle{amsalpha}
\bibliography{../../literature}
}

\noindent IH\'ES, 35 Route de Chartres, 91440 Bures-sur-Yvette, France\\
E-mail adress: {\sf rvk@ihes.fr}
\end{document}